\providecommand{\U}[1]{\protect\rule{.1in}{.1in}}
\newtheorem{theorem}{Theorem}
\newtheorem{corollary}[theorem]{Corollary}
\newtheorem{proposition}[theorem]{Proposition}
\newtheorem{remark}[theorem]{Remark}
\newenvironment{proof}[1][Proof]{\noindent\textbf{#1.} }{\ \rule{0.5em}{0.5em}}
\begin{document}

\title{The maximal monotonicity of the subdifferential in locally convex
spaces via upper envelopes }

\author{M.D. Voisei}

\date{{}}
\maketitle
\begin{abstract}
Equivalent conditions that make the convex subdifferential maximal
monotone are investigated in the general settings of locally convex
spaces. 
\end{abstract}

\section{Preliminaries and notations}

The aim of this paper is to provide equivalent conditions that allow
the convex subdifferentials of all proper convex lower semicontinuous
functions defined in a locally convex space be maximal monotone. Our
goal will be achieved through the introduction of several types of
upper envelopes for a function followed by a study of their properties
in connection with the maximal monotonicity of that function subdifferential. 

We will address our objective mainly via our characterization of maximal
monotonicity that says that an operator is maximal monotone iff it
is representable and of type NI (introduced in \cite[Theorem\ 2.3]{MR2207807};
see also \cite[Theorem\ 3.4]{MR2453098}, or Theorem \ref{max-sd}
below). That characterization is provided in terms of the \emph{Fitzpatrick
function} $\varphi_{T}:X\times X^{*}\rightarrow\overline{\mathbb{R}}$
of a multi-valued operator $T:X\rightrightarrows X^{*}$ which is
given by (see \cite{MR1009594})
\begin{equation}
\varphi_{T}(x,x^{*}):=\sup\{\langle x-a,a^{*}\rangle+\langle a,x^{*}\rangle\mid(a,a^{*})\in\operatorname*{Graph}T\},\ (x,x^{*})\in X\times X^{*},\label{ff}
\end{equation}
where $(X,\tau)$ is a non-trivial (that is, $X\neq\{0\}$) real Hausdorff
separated locally convex space (LCS for short), $X^{\ast}$ is its
topological dual usually endowed with the weak-star topology denoted
by $w^{*}$, $(X^{*},w^{*})^{*}$ is identified with $X$, $\left\langle x,x^{\ast}\right\rangle :=x^{\ast}(x)=:c(x,x^{\ast})$,
for $x\in X$, $x^{\ast}\in X^{\ast}$ denotes the \emph{duality product}
or \emph{coupling }of $X\times X^{\ast}$, and $\operatorname*{Graph}T=\{(x,x^{*})\in X\times X^{*}\mid x^{*}\in T(x)\}$
stands for the \emph{graph} of $T:X\rightrightarrows X^{*}$. 

When $X$ is a Banach space, Rockafellar showed in \cite[Theorem\ A]{MR0262827}
that if $f:X\to\overline{\mathbb{R}}$ is proper convex lower semicontinuous
then its \emph{convex subdifferential} $\partial f:X\rightrightarrows X^{*}$,
defined by $x^{*}\in\partial f(x)$ if $f(x)$ is finite and for every
$y\in X$, $f(y)\ge f(x)+\langle y-x,x^{*}\rangle$, is maximal monotone
($\partial f\in\mathfrak{M}(X)$ for short). In particular the \emph{normal
cone} to $C$ which is given by $N_{C}=\partial\iota_{C}\in\mathfrak{M}(X)$,
whenever $C\subset X$ is closed convex. Here $\iota_{C}(x)=0$, for
$x\in C$; $\iota_{C}(x)=+\infty$, for $x\in X\setminus C$ denotes
the \emph{indicator function} of $C$. 

Due to the biconjugate formula and since the maximal monotonicity
is a duality property, the same holds when $X$ is the topological
dual of a Banach space endowed with the weak-star topology. Let us
mention here that, when $X$ is a non-reflexive Banach space, its
dual $X^{*}$ is neither a Banach space nor barreled under any topology
(including its Mackey topology) compatible with the duality $(X^{*},X)$. 

Part of the arguments used in \cite{MR0262827} coming from \cite[Proposition 6 (b), (c)]{MR0166579}
and \cite[(A), (B)]{MR0178103} is to show that the subdifferentiability
domain of a proper convex lower semicontinuous function determines
perfectly the function in the sense that the function coincide with
the closed convex hull of its restriction on its subdifferentiability
domain. Also, as a consequence, every proper convex lower semicontinuous
function defined in a Banach space coincides with the supremum of
the supporting affine functions determined by its subgradients (see
again \cite[Theorem\ 2,\ p.\ 609]{MR0178103}). 

Naturally, one may ask whether these properties are characteristic
to the previously mentioned two distinct types of spaces: Banach spaces
and duals of Banach spaces (endowed with the weak star topology) and,
whether these properties can be extended to general LCS's, in the
sense of finding conditions such as conditions (A), (B) in \cite{MR0178103}
that allow all the convex subdifferential be maximal monotone. 

It is interesting to mention here that the answer for the question
stated in \cite[p.\ 606]{MR0178103}: ``It would be interesting to
know whether every space for which (A) and (B) always hold is necessarily
a Banach space \textquotedbl in disguise\textquotedbl{} (i.e. in
its Mackey topology) as the counter-examples seem to suggest.'' is
negative since, given a non-reflexive Banach space $X$, its topological
dual $X^{*}$ has properties (A), (B) with respect to the duality
$(X^{*},X)$ but no barreled (and moreover Banach space) topology
on $X^{*}$ compatible with the duality $(X^{*},X)$ exists. 

Our main argument is built on the introduction of several envelopes
that associate naturally to a function whose subdifferential is maximal
monotone. 

\strut

As usual, given a LCS $(E,\mu)$ and $A\subset E$ we denote by ``$A^{\#}$''
the \emph{portable hull} of $A$ (see \cite{mmnc} for more details),
``$\operatorname*{conv}A$'' the \emph{convex hull} of $A$, ``$\operatorname*{cl}_{\mu}(A)=\overline{A}^{\mu}$''
the $\mu-$\emph{closure} of $A$, ``$\operatorname*{int}_{\mu}A$''
the $\mu-$\emph{topological interior }of $A$, ``$\operatorname*{core}A$''
the \emph{algebraic interior} of $A$. 

For $f,g:E\rightarrow\overline{\mathbb{R}}$ we set $[f\leq g]:=\{x\in E\mid f(x)\leq g(x)\}$;
the sets $[f=g]$, $[f<g]$, and $[f>g]$ being defined in a similar
manner. We write $f\ge g$ shorter for $f(z)\ge g(z)$, for every
$z\in E$. 

The class of closed convex neighborhoods of $x\in X$ in $(X,\tau)$
is denoted by $\mathscr{V}_{\tau}(x)$. 

For a multi-function $T:X\rightrightarrows X^{*}$, $D(T)=\operatorname*{Pr}_{X}(\operatorname*{Graph}T)$,
$R(T)=\operatorname*{Pr}_{X^{*}}(\operatorname*{Graph}T)$ stand for
the domain and the range of $T$ respectively, where $\operatorname*{Pr}_{X}$,
$\operatorname*{Pr}_{X^{*}}$ denote the projections of $X\times X^{*}$
onto $X$, $X^{*}$ respectively. When no confusion can occur, $T:X\rightrightarrows X^{\ast}$
will be identified with $\operatorname*{Graph}T\subset X\times X^{*}$.

We consider the following classes of functions and operators on $(X,\tau)$
\begin{description}
\item [{$\Lambda(X)$}] the class formed by proper convex functions $f:X\rightarrow\overline{\mathbb{R}}$.
Recall that $f$ is \emph{proper} if $\operatorname*{dom}f:=\{x\in X\mid f(x)<\infty\}$
is nonempty and $f$ does not take the value $-\infty$, 
\item [{$\Gamma_{\tau}(X)$}] the class of functions $f\in\Lambda(X)$
that are $\tau$\textendash lower semi-continuous (\emph{$\tau$\textendash }lsc
for short), 
\item [{$\mathcal{M}(X)$}] the class of non-empty monotone operators $T:X\rightrightarrows X^{\ast}$
($\operatorname*{Graph}T\neq\emptyset$). Recall that $T:X\rightrightarrows X^{\ast}$
is \emph{monotone} if, for all $(x_{1},x_{1}^{*}),(x_{2},x_{2}^{*})\in\operatorname*{Graph}T$,
$\left\langle x_{1}-x_{2},x_{1}^{\ast}-x_{2}^{\ast}\right\rangle \geq0$
or, equivalently, $\operatorname*{Graph}T\subset\operatorname*{Graph}(T^{+})$. 
\item [{$\mathfrak{M}(X)$}] the class of maximal monotone operators $T:X\rightrightarrows X^{*}$.
The maximality is understood in the sense of graph inclusion as subsets
of $X\times X^{*}$. 
\end{description}
To a proper function $f:(X,\tau)\rightarrow\overline{\mathbb{R}}$
we associate the following notions: 
\begin{description}
\item [{$\operatorname*{Epi}f:=\{(x,t)\in X\times\mathbb{R}\mid f(x)\leq t\}$}] is
the \emph{epigraph} of $f$,
\item [{$\operatorname*{conv}f:X\rightarrow\overline{\mathbb{R}}$,}] the
\emph{convex hull} of $f$, which is the greatest convex function
majorized by $f$, $(\operatorname*{conv}f)(x):=\inf\{t\in\mathbb{R}\mid(x,t)\in\operatorname*{conv}(\operatorname*{Epi}f)\}$
for $x\in X$,
\item [{$\operatorname*{cl}_{\tau}\operatorname*{conv}f:X\rightarrow\overline{\mathbb{R}}$,}] the
\emph{$\tau-$lsc convex hull} of $f$, which is the greatest \emph{$\tau$\textendash }lsc
convex function majorized by $f$, $(\operatorname*{cl}_{\tau}\operatorname*{conv}f)(x):=\inf\{t\in\mathbb{R}\mid(x,t)\in\operatorname*{cl}_{\tau}(\operatorname*{conv}\operatorname*{Epi}f)\}$
for $x\in X$,
\item [{$f^{\ast}:X^{\ast}\rightarrow\overline{\mathbb{R}}$}] is the \emph{convex
conjugate} of $f:X\rightarrow\overline{\mathbb{R}}$ with respect
to the dual system $(X,X^{\ast})$, $f^{\ast}(x^{\ast}):=\sup\{\left\langle x,x^{\ast}\right\rangle -f(x)\mid x\in X\}$
for $x^{\ast}\in X^{\ast}$.
\end{description}
Accordingly, for $C\subset X$, $\sigma_{C}(x^{*}):=\sup\{\langle x,x^{*}\rangle\mid x\in C\}=\iota_{C}^{*}(x^{*})$,
for $x^{*}\in X^{*}$. Recall that $f^{**}:=(f^{*})^{*}=\operatorname*{cl}\operatorname*{conv}f$
whenever $\operatorname*{cl}\operatorname*{conv}f$ (or equivalently
$f^{*}$) is proper. Here, for functions defined in $X^{*}$, all
the conjugates are taken with respect to the dual system $(X^{*},X)$. 

Throughout this article the conventions $\infty-\infty=\infty$, $\sup\emptyset=-\infty$,
and $\inf\emptyset=\infty$ are enforced while the use of the topology
notation is avoided when the topology is clearly understood.

\section{Functions with maximal monotone subdifferentials}

In this section we are concerned with properties of functions $f:X\to\overline{\mathbb{R}}$
for which $\partial f\in\mathfrak{M}(X)$ and try to identify the
natural function framework under which we can find conditions on a
function that make its subdifferential maximal monotone. 

\begin{theorem} \label{dfdom} Let $X$ be a LCS and let $f:X\to\overline{\mathbb{R}}$.
For every $(x,x^{*})\in X\times X^{*}$, 
\begin{equation}
\varphi_{\partial f}(x,x^{*})\le f^{**}(x)+f^{*}(x^{*}).\label{ineq-fidf}
\end{equation}
\emph{(i)} If $\operatorname*{Graph}(\partial f)\neq\emptyset$ then
$f$, $\operatorname*{cl}\operatorname*{conv}f$, $f^{*}$ are proper,
$f^{**}=\operatorname*{cl}\operatorname*{conv}f$, $\operatorname*{Graph}(\partial f)\subset\operatorname*{Graph}(\partial(\operatorname*{cl}\operatorname*{conv}f))$,
and, for every $x\in D(\partial f)$, $\partial f(x)=\partial(\operatorname*{cl}\operatorname*{conv}f)(x)$. 

If, in addition, $\partial f\in\mathfrak{M}(X)$ then $\partial f=\partial(\operatorname*{cl}\operatorname*{conv}f)$
and $\operatorname*{conv}D(\partial f)$ is dense in $\operatorname*{conv}(\operatorname*{dom}(\operatorname*{cl}\operatorname*{conv})f)$. 

\medskip

\noindent \emph{(ii)} If $h:\mathbb{R}\to\overline{\mathbb{R}}$ has
$\partial h\in\mathfrak{M}(\mathbb{R})$ then $h$ is proper convex.
There exists a proper convex $g:\mathbb{R}\to\overline{\mathbb{R}}$
which is not lsc but $\partial g\in\mathfrak{M}(\mathbb{R})$. 

\medskip

\noindent \emph{(iii)} If $X$ is at least two-dimensional then there
exists $g:X\to\overline{\mathbb{R}}$ which is neither convex nor
lsc such that $\partial g\in\mathfrak{M}(X)$. 

\medskip

\noindent \emph{(iv)} Assume, in addition, that $X$ is a Banach space
or that $f$ is continuous at some $\overline{x}\in\operatorname*{dom}f$.
Then $f$ is lsc and $\partial f\in\mathfrak{M}(X)$ iff $f\in\Gamma(X)$. 

\medskip

\noindent \emph{(v)} Conversely, if $\partial(\operatorname*{cl}\operatorname*{conv}f)\in\mathfrak{M}(X)$
then $f$ is proper. There exists $g:X\to\overline{\mathbb{R}}$ which
is neither convex nor lsc such that $\partial(\operatorname*{cl}\operatorname*{conv}g)\in\mathfrak{M}(X)$
and $\operatorname*{Graph}(\partial g)\subsetneq\operatorname*{Graph}(\partial(\operatorname*{cl}\operatorname*{conv}g))$.
\end{theorem}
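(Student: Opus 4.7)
The plan is to prove the master inequality \eqref{ineq-fidf} first and then use item (i) as a pivot for the remaining items. For \eqref{ineq-fidf}, fix $(a,a^{*})\in\operatorname*{Graph}(\partial f)$; the Fenchel-Young equality $\langle a,a^{*}\rangle=f(a)+f^{*}(a^{*})$ lets me split
\begin{equation*}
\langle x-a,a^{*}\rangle+\langle a,x^{*}\rangle=\bigl(\langle x,a^{*}\rangle-f^{*}(a^{*})\bigr)+\bigl(\langle a,x^{*}\rangle-f(a)\bigr)\le f^{**}(x)+f^{*}(x^{*}),
\end{equation*}
and then take the supremum over $(a,a^{*})$. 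For (i), any $(a,a^{*})\in\partial f$ produces the continuous affine minorant $\ell(y):=f(a)+\langle y-a,a^{*}\rangle\le f$; since $\ell$ is convex and $\tau$-lsc, $\ell\le\operatorname*{cl}\operatorname*{conv}f$, and evaluating at $a$ forces $\operatorname*{cl}\operatorname*{conv}f(a)=f(a)$ together with $a^{*}\in\partial(\operatorname*{cl}\operatorname*{conv}f)(a)$. Properness of $f$, $f^{*}$, $\operatorname*{cl}\operatorname*{conv}f$ and the identity $f^{**}=\operatorname*{cl}\operatorname*{conv}f$ follow from the biconjugate theorem applied to the now-minorized $f$. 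Under $\partial f\in\mathfrak{M}(X)$, monotonicity of $\partial(\operatorname*{cl}\operatorname*{conv}f)\supset\partial f$ forces equality by maximality. Density of $\operatorname*{conv}D(\partial f)$ in $\operatorname*{dom}(\operatorname*{cl}\operatorname*{conv}f)=\operatorname*{conv}(\operatorname*{dom}(\operatorname*{cl}\operatorname*{conv}f))$ I deduce by applying Theorem \ref{max-sd} to $g:=\operatorname*{cl}\operatorname*{conv}f\in\Gamma(X)$: maximal monotonicity makes $\varphi_{\partial g}$ a representative, \eqref{ineq-fidf} yields $\operatorname*{Pr}_{X}(\operatorname*{dom}\varphi_{\partial g})\subset\operatorname*{dom}g$, and the standard identification $\overline{\operatorname*{conv}D(T)}=\overline{\operatorname*{Pr}_{X}(\operatorname*{dom}\varphi_{T})}$ for $T\in\mathfrak{M}(X)$ gives the reverse density.

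For (ii), the converse follows from (i): $\partial h\in\mathfrak{M}(\mathbb{R})$ gives $h=\operatorname*{cl}\operatorname*{conv}h$ on the dense $D(\partial h)\subset\operatorname*{dom}(\operatorname*{cl}\operatorname*{conv}h)$, and in one dimension the only possible discrepancy is at the (at most two) endpoints of that interval, where $h\ge\operatorname*{cl}\operatorname*{conv}h$ still preserves convexity. For the counter-example I take $g(0)=1$, $g(x)=-\sqrt{x}$ on $(0,1]$, $g=+\infty$ otherwise: a direct check shows $g$ is proper convex but not lsc at $0$, $\partial g(0)=\emptyset$ because the derivative blows up, and $\partial g=\partial(\operatorname*{cl}\operatorname*{conv}g)\in\mathfrak{M}(\mathbb{R})$. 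For (iii), I promote this construction to $\dim X\ge 2$ by selecting $\bar{g}\in\Gamma(X)$ whose domain has a one-dimensional boundary portion along which $\partial\bar{g}$ is empty, and raising $g$ above $\bar{g}$ along that portion in a strictly non-linear fashion while preserving $\operatorname*{cl}\operatorname*{conv}g=\bar{g}$; then $g$ is neither convex nor lsc, and $\partial g=\partial\bar{g}\in\mathfrak{M}(X)$.

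For (iv), the direction $f\in\Gamma(X)\Rightarrow\partial f\in\mathfrak{M}(X)$ is Rockafellar's theorem in the Banach case and its continuity-point extension in the LCS case. Conversely, $f$ lsc with $\partial f\in\mathfrak{M}(X)$ yields via (i) that $f=\operatorname*{cl}\operatorname*{conv}f$ on the dense set $D(\partial f)$; either Br\o ndsted-Rockafellar (Banach) or continuity at $\overline{x}$ (LCS) propagates this equality to $\operatorname*{dom}f$, giving $f\in\Gamma(X)$. For (v), properness of $f$ follows since $\partial(\operatorname*{cl}\operatorname*{conv}f)\ne\emptyset$ forces $\operatorname*{cl}\operatorname*{conv}f$ proper, which rules out $f\equiv+\infty$ and $f$ attaining $-\infty$. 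The counter-example is a variant of (iii): at an interior point $\overline{x}$ of $\operatorname*{dom}(\operatorname*{cl}\operatorname*{conv}g)$ I raise $g(\overline{x})$ strictly above $\operatorname*{cl}\operatorname*{conv}g(\overline{x})$, killing $\partial g(\overline{x})$ while leaving $\operatorname*{cl}\operatorname*{conv}g$ and hence $\partial(\operatorname*{cl}\operatorname*{conv}g)$ intact. The main obstacle throughout is the density claim in (i) (and its use in (iv)): it requires an LCS-version of the Br\o ndsted-Rockafellar theorem, which I reduce to the Fitzpatrick representability supplied by Theorem \ref{max-sd}.
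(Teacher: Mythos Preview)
The paper does not supply a proof of Theorem~\ref{dfdom}; it states the theorem and moves on. So there is nothing to compare against, and your attempt has to stand on its own. Most of it does: the proof of \eqref{ineq-fidf}, the properness and inclusion claims in (i), the one-dimensional analysis in (ii), the converse direction and counter-example in (v), and the converse in (iv) via Br{\o}ndsted--Rockafellar or interior continuity are all sound (if brief).

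There is, however, a genuine slip in your density argument in (i). You write that \eqref{ineq-fidf} yields $\operatorname*{Pr}_{X}(\operatorname*{dom}\varphi_{\partial g})\subset\operatorname*{dom}g$, but the inequality $\varphi_{\partial g}\le g\oplus g^{*}$ gives the \emph{opposite} inclusion: fixing any $x^{*}_{0}\in\operatorname*{dom}g^{*}$ shows $\operatorname*{dom}g\subset\operatorname*{Pr}_{X}(\operatorname*{dom}\varphi_{\partial g})$. With the direction reversed the argument goes through: combine $\operatorname*{dom}g\subset\operatorname*{Pr}_{X}(\operatorname*{dom}\varphi_{\partial g})$ with the standard identity $\overline{\operatorname*{conv}D(T)}=\overline{\operatorname*{Pr}_{X}(\operatorname*{dom}\varphi_{T})}$ for $T\in\mathfrak{M}(X)$ to get $\operatorname*{dom}g\subset\overline{\operatorname*{conv}D(\partial g)}$. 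As written, your inclusion (even if it were true) would only give $\overline{\operatorname*{conv}D(\partial g)}\subset\overline{\operatorname*{dom}g}$, which is the trivial direction.

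A smaller point on (iii): ``raising $g$ above $\bar g$ \ldots\ in a strictly non-linear fashion'' is not the right condition to destroy convexity; a strictly non-linear but convex bump (say a quadratic) on the boundary portion can still leave $g$ convex. You need the modification on that portion to be non-convex as a function of the boundary parameter (e.g.\ a concave bump). You should also note that the base function $\bar g$ must be chosen so that $\partial\bar g\in\mathfrak{M}(X)$ is already guaranteed in a general LCS; taking $\bar g(x)=-\sqrt{\langle x,e^{*}\rangle}+\iota_{[0,1]}(\langle x,e^{*}\rangle)$ for some $e^{*}\in X^{*}\setminus\{0\}$ works because $\bar g$ is continuous on $\operatorname*{int}(\operatorname*{dom}\bar g)\neq\emptyset$, so Theorem~\ref{f-cont} applies, and the hyperplane $\ker e^{*}$ (where $\partial\bar g$ is empty) is at least one-dimensional since $\dim X\ge2$.
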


Similarly one can show the following result.

\begin{corollary} Let $X$ be a LCS and let $f:X\to\overline{\mathbb{R}}$.
Assume that $X$ is a Banach space or that $f$ is continuous at some
$\overline{x}\in\operatorname*{dom}f$. If $\partial f\in\mathfrak{M}(X)$
then $\operatorname*{cl}f$ is convex. \end{corollary}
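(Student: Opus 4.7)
The plan is to apply Theorem~\ref{dfdom}(iv) to the lsc hull $h:=\operatorname*{cl}f$: once we know $h$ is lsc, $\partial h\in\mathfrak{M}(X)$, and $h$ inherits the Banach-or-continuity hypothesis from $f$, part (iv) will yield $h\in\Gamma(X)$, and hence $\operatorname*{cl}f=h$ is convex.

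First I would show the inclusion $\partial f\subseteq\partial h$. Given $(x,x^{*})\in\operatorname*{Graph}(\partial f)$, the affine function $\ell(y):=f(x)+\langle y-x,x^{*}\rangle$ minorizes $f$ and is continuous, hence lsc; because $h=\operatorname*{cl}f$ is the greatest lsc minorant of $f$, we obtain $\ell\le h$. Evaluating at $y=x$ pinches $f(x)=\ell(x)\le h(x)\le f(x)$, so $h(x)=f(x)$, and the inequality $h(y)\ge\ell(y)$ rewrites as $h(y)\ge h(x)+\langle y-x,x^{*}\rangle$, i.e.\ $x^{*}\in\partial h(x)$. Since every subdifferential is monotone, $\partial h$ is a monotone operator containing the maximal monotone operator $\partial f$, so maximality of $\partial f$ forces $\partial h=\partial f\in\mathfrak{M}(X)$.

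Next I would transfer the auxiliary hypothesis from $f$ to $h$. The Banach case is automatic. If $f$ is continuous at some $\overline{x}\in\operatorname*{dom}f$, then $f$ is in particular lsc at $\overline{x}$, so $h(\overline{x})=f(\overline{x})$; the bound $h\le f$ then yields $\limsup_{y\to\overline{x}}h(y)\le f(\overline{x})=h(\overline{x})$, which combined with the lsc of $h$ gives continuity of $h$ at $\overline{x}$. Properness of $h$ follows from $h\ge f^{**}=\operatorname*{cl}\operatorname*{conv}f$, which is proper by Theorem~\ref{dfdom}(i) (applicable since $\operatorname*{Graph}(\partial f)\ne\emptyset$); hence $\overline{x}\in\operatorname*{dom}h$.

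Applying Theorem~\ref{dfdom}(iv) to $h$---which is lsc, has $\partial h\in\mathfrak{M}(X)$, and satisfies the Banach-or-continuity hypothesis---yields $h\in\Gamma(X)$; in particular $\operatorname*{cl}f=h$ is convex, as required. The main subtlety is the second step, where one has to exploit the sandwich $f^{**}\le h\le f$ to propagate the continuity assumption from $f$ to its lsc hull in a possibly non-metrizable LCS; the first and last steps are then essentially bookkeeping around a direct appeal to part (iv).
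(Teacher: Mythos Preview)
Your proposal is correct and matches the paper's intended route: the paper only remarks ``Similarly one can show the following result,'' pointing back to Theorem~\ref{dfdom}(iv), and your argument is precisely the reduction of the corollary to an application of (iv) with $h=\operatorname*{cl}f$ in place of $f$. The verifications you supply (that $\partial f\subset\partial h$ via the affine minorant $\ell$, that maximality forces equality, and that the Banach/continuity hypothesis transfers to $h$ through the sandwich $f^{**}\le h\le f$) are exactly what is needed to make that reduction go through.
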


If, outside its subdifferentiability domain, we increase the values
of a function whose subdifferential is maximal monotone, we still
get the same maximal monotone subdifferential; but, as previously
seen, this process can remove the convexity and/or the lower semicontinuity
of the function given these properties are present. More precisely,
\begin{equation}
\partial f\in\mathfrak{M}(X),\ g=f\ {\rm on}\ D(\partial f),\ g\ge f\Rightarrow\partial g=\partial f\in\mathfrak{M}(X).\label{e3}
\end{equation}

Therefore, in order to be able to find conditions that make the subdifferential
maximal monotone, the subdifferentiability domain of a function plays
an important role. 

In Banach spaces it is known that the subdifferentiability domain
of a proper convex lsc function is dense in the domain of the function
(see again \cite[Theorem\ 3.1.4 (i), p. 162]{MR1921556}). That allows
to strengthen part of Theorem \ref{dfdom} (i), (iv). 

\begin{proposition} \label{Ba} Let $X$ be a Banach space and let
$f:X\to\overline{\mathbb{R}}$ be such that $\partial f\in\mathfrak{M}(X)$.
Then $D(\partial f)$ is dense in $\operatorname*{dom}f$. Moreover,
if $f$ is lsc at $x\in\operatorname*{dom}(\operatorname*{cl}\operatorname*{conv}f)$
then there exists $(x_{n})_{n}\subset D(\partial f)$ such that $x_{n}\to x$
in $X$ and $f(x_{n})\to f(x)=(\operatorname*{cl}\operatorname*{conv}f)(x)$.
In particular
\[
D(\partial f)\subset\{x\in\operatorname*{dom}f\mid f\ {\rm is\ lsc\ at}\ x\}\subset\{x\in\operatorname*{dom}f\mid f(x)=(\operatorname*{cl}\operatorname*{conv}f)(x)\},
\]
\[
(\operatorname*{dom}(\operatorname*{cl}\operatorname*{conv})f\setminus\operatorname*{dom}f)\cap\{x\in X\mid f\ {\rm is\ lsc\ at}\ x\}=\emptyset.
\]
\end{proposition}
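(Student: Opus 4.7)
The plan is to reduce the whole statement to the Brondsted--Rockafellar density theorem applied to $g:=\operatorname*{cl}\operatorname*{conv}f$. By Theorem \ref{dfdom}(i), the hypothesis $\partial f\in\mathfrak{M}(X)$ guarantees that $g=f^{**}$ is proper convex lsc, that $\partial f=\partial g$ (so $D(\partial f)=D(\partial g)\subset\operatorname*{dom}f\subset\operatorname*{dom}g$), and that $f^{*}=g^{*}$. The external input I would invoke is the classical Brondsted--Rockafellar theorem on a Banach space: for each $x\in\operatorname*{dom}g$ there is $(x_n)\subset D(\partial g)=D(\partial f)$ with $x_n\to x$ and $g(x_n)\to g(x)$.

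Before using this I would record two easy observations. First, on $D(\partial f)$ the functions $f$ and $g$ coincide: if $x^{*}\in\partial f(x)=\partial g(x)$, the Young--Fenchel equalities give $f(x)+f^{*}(x^{*})=\langle x,x^{*}\rangle=g(x)+g^{*}(x^{*})$, and the identity $f^{*}=g^{*}$ then forces $f(x)=g(x)$. Second, at every $x\in D(\partial f)$ the function $f$ is lsc: any $x^{*}\in\partial f(x)$ supplies the continuous affine minorant $f(x)+\langle\,\cdot-x,x^{*}\rangle\le f$, so $\liminf_{y\to x}f(y)\ge f(x)$.

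Density of $D(\partial f)$ in $\operatorname*{dom}f$ is now immediate, because any $x\in\operatorname*{dom}f$ lies in $\operatorname*{dom}g$ and the Brondsted--Rockafellar sequence sits in $D(\partial f)\subset\operatorname*{dom}f$. For the stronger approximation, take $x\in\operatorname*{dom}g$ at which $f$ is lsc and let $(x_n)\subset D(\partial f)$ be as above; since $f=g$ on $D(\partial f)$ we have $f(x_n)=g(x_n)\to g(x)$, and combining with lsc of $f$ at $x$,
\[
f(x)\le\liminf_{n}f(x_n)=g(x)\le f(x),
\]
so $f(x)=g(x)$ and $f(x_n)\to f(x)$. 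In particular $f(x)<\infty$.

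The displayed chain of inclusions is a direct read-off. The inclusion $D(\partial f)\subset\{x\in\operatorname*{dom}f\mid f\text{ is lsc at }x\}$ is the second observation above, and $\{x\in\operatorname*{dom}f\mid f\text{ is lsc at }x\}\subset\{x\in\operatorname*{dom}f\mid f(x)=g(x)\}$ is exactly the content of the approximation applied to $x\in\operatorname*{dom}f$. The emptiness of $(\operatorname*{dom}g\setminus\operatorname*{dom}f)\cap\{x\mid f\text{ is lsc at }x\}$ is the contrapositive: any such $x$ would satisfy $f(x)=\infty$, yet the approximation forces $f(x)=g(x)<\infty$. The only genuine obstacle is the Brondsted--Rockafellar step, which is why the Banach-space hypothesis is essential; once it is available, the rest is conjugate-duality bookkeeping on top of Theorem \ref{dfdom}(i).
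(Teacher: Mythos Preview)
Your argument is correct and matches the route the paper implicitly takes: the paper does not spell out a proof of Proposition~\ref{Ba} but prefaces it by pointing to \cite[Theorem\ 3.1.4 (i), p.\ 162]{MR1921556} (the Br{\o}ndsted--Rockafellar density/value-approximation result for $g=\operatorname*{cl}\operatorname*{conv}f\in\Gamma(X)$) and relies on Theorem~\ref{dfdom}(i) for the identifications $\partial f=\partial g$, $f^{*}=g^{*}$, and $f=g$ on $D(\partial f)$. Your bookkeeping from these ingredients---lsc of $f$ at subdifferentiability points via an affine minorant, the sandwich $f(x)\le\liminf f(x_n)=g(x)\le f(x)$, and the contrapositive for the last displayed line---is exactly what is needed, so nothing is missing or different in substance.
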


Based on the previous theorem we conclude that the natural function
framework, under which we could identify properties of $f$ that make
$\partial f$ maximal monotone, is that of proper convex lsc functions;
outside this class, as the examples in this section show, that task
is impossible. 

\section{Upper envelopes}

First, given $X$ a LSC and $f:X\to\overline{\mathbb{R}}$, we define
the \emph{upper envelope} of the continuous affine functions defined
by $\partial f$ as $f^{\cup}:X\to\overline{\mathbb{R}}$ given by
\begin{equation}
f^{\cup}(x):=\sup\{\langle x-a,a^{*}\rangle+f(a)\mid(a,a^{*})\in\operatorname*{Graph}\partial f\}=\sup\{\langle x,a^{*}\rangle-f^{*}(a^{*})\mid a^{*}\in R(\partial f)\};\label{eq:}
\end{equation}
and the \emph{portable hull of} $f$ by 
\begin{equation}
f^{\#}:=f^{\cup}+\iota_{(\operatorname*{dom}f)^{\#}}.\label{eq:-1}
\end{equation}

Here for $C\subset X$, $C^{\#}$ denotes the portable hull of $C$
and recall that a set $C\subset X$ is \emph{portable} if $C=C^{\#}$
(see \cite{mmnc}). 

Note that, for every $C\subset X$, $\iota_{C}^{\cup}:=(\iota_{C})^{\cup}=\iota_{C^{\#}}=(\iota_{C})^{\#}=:\iota_{C}^{\#}$. 

Throughout this article we use the same notation $E^{\#}$ for the
portable hull of a set $E\subset X\times\mathbb{R}$. The notation
$f^{\#}$ is natural via its epigraph as we will see in the next result. 

\begin{theorem} \label{f cup diez}Let $(X,\tau)$ be a LCS and let
$f:X\to\overline{\mathbb{R}}$. Then

\medskip

\emph{(i)} $f^{\cup}\le f^{\#}\le f$ and, for every $x\in D(\partial f)$,
$f^{\cup}(x)=f^{\#}(x)=f(x)$; 

\medskip

\emph{(ii) }$f^{\cup}$ is proper iff $\operatorname*{Graph}(\partial f)\neq\emptyset$
iff $f^{\#}$ is proper iff $\operatorname*{Graph}(\partial f)\neq\emptyset$;
in this case $f^{\cup},f^{\#}\in\Gamma_{\tau}(X)$;

\medskip

\emph{(iii)} \emph{$\operatorname*{Epi}(f^{\cup})\neq\emptyset$};
moreover 
\[
(\operatorname*{Epi}f)^{\#}=\operatorname*{Epi}(f^{\cup})\cap((\operatorname*{dom}f)^{\#}\times\mathbb{R})=\operatorname*{Epi}(f^{\#})\neq\emptyset;
\]

\emph{(iv)} $\partial f^{\cup}|_{D(\partial f)}=\partial f^{\#}|_{D(\partial f)}=\partial f$,
in particular $\operatorname*{Graph}\partial f\subset\operatorname*{Graph}\partial f^{\cup}\cap\operatorname*{Graph}\partial f^{\#}$; 

\medskip

\emph{(v)} $f^{\cup\cup}:=(f^{\cup})^{\cup}=f^{\cup}=(f^{\cup})^{\#}=:f^{\cup\#}$,
$f^{\#\#}:=(f^{\#})^{\#}=f^{\#}$; 

\medskip

\emph{(vi)} $N_{\operatorname*{Epi}f}\in\mathfrak{M}(X\times\mathbb{R})$
iff $f=f^{\#}$, 

\medskip

\emph{(vii)} $N_{\operatorname*{Epi}(f^{\cup})},N_{\operatorname*{Epi}(f^{\#})}\in\mathfrak{M}(X\times\mathbb{R})$
and $(\operatorname*{Epi}(f^{\cup}))^{\#}=\operatorname*{Epi}(f^{\cup})$, 

\medskip

\emph{(viii)} $\partial f=\partial f^{\cup}$ iff $D(\partial f)=D(\partial f^{\cup})$;
$\partial f=\partial f^{\#}$ iff $D(\partial f)=D(\partial f^{\#})$,

\medskip

\emph{(ix)} For $n\ge2$, let $f^{n\cup}:X\to\overline{\mathbb{R}}$
be defined by 
\[
\begin{aligned}f^{n\cup}(x):=\sup\{\langle x-a_{1},a_{1}^{*}\rangle+\langle a_{1}-a_{2},a_{2}^{*}\rangle+\ldots & +\langle a_{n-1}-a_{n},a_{n}^{*}\rangle+f(a_{n})\\
 & \mid(a_{k},a_{k}^{*})\in\operatorname*{Graph}\partial f,\ k=\overline{1,n}\}.
\end{aligned}
\]
Then, for every $n\ge2$, $f^{n\cup}=f^{\cup}$. 

\end{theorem}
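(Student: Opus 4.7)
The plan is to treat the nine items in a logically economical order: set up (i) and (ii) from the definitions, then prove (iv) by direct subgradient manipulation, then use (iv) to get the idempotence statements (v), (vii), (viii), derive (iii) and (vi) from properties of the portable hull, and finish with (ix) by a telescoping argument.

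First I would notice that on $\operatorname{Graph}\partial f$ the Fenchel identity $f(a)+f^{*}(a^{*})=\langle a,a^{*}\rangle$ holds, so $\langle x-a,a^{*}\rangle+f(a)=\langle x,a^{*}\rangle-f^{*}(a^{*})$, yielding the second form in the definition of $f^{\cup}$. This presents $f^{\cup}$ as a pointwise supremum of continuous affine functions, so $f^{\cup}\in\Gamma_{\tau}(X)$ whenever it is not identically $-\infty$; and $f^{\cup}\equiv-\infty$ exactly when $\operatorname{Graph}\partial f=\emptyset$. For (i), every affine function in the sup is $\le f$ by the subgradient inequality, so $f^{\cup}\le f$; taking $a=x$ for any $x\in D(\partial f)$ gives $f(x)\le f^{\cup}(x)$, hence equality. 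The inequalities $f^{\cup}\le f^{\#}\le f$ and the properness part of (ii) then follow because $(\operatorname{dom}f)^{\#}\supseteq\operatorname{dom}f\supseteq D(\partial f)$ and $\iota_{(\operatorname{dom}f)^{\#}}\ge 0$; $\tau$-lower semicontinuity of $f^{\#}$ is inherited from $f^{\cup}$ and from the closedness of $(\operatorname{dom}f)^{\#}$ (a property of portable sets recorded in \cite{mmnc}).

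For (iv), given $a\in D(\partial f)$ and $a^{*}\in\partial f(a)$, the value $\langle x-a,a^{*}\rangle+f(a)$ is one term in the sup defining $f^{\cup}(x)$, so $f^{\cup}(x)\ge\langle x-a,a^{*}\rangle+f^{\cup}(a)$ (using $f^{\cup}(a)=f(a)$); this gives $a^{*}\in\partial f^{\cup}(a)$, and the same holds for $f^{\#}$ since $a\in(\operatorname{dom}f)^{\#}$. The reverse inclusion at points of $D(\partial f)$ uses $f^{\cup},f^{\#}\le f$ with equality at $a$ to transfer the subgradient inequality back to $f$. For (v), the idempotence $f^{\cup\cup}=f^{\cup}$ splits as: $\ge$ from $\operatorname{Graph}\partial f\subset\operatorname{Graph}\partial f^{\cup}$ and $f^{\cup}=f$ on $D(\partial f)$ (so the sup defining $f^{\cup\cup}$ dominates the sup defining $f^{\cup}$); $\le$ from the subgradient inequality applied to each $(a,a^{*})\in\operatorname{Graph}\partial f^{\cup}$. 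The identities $f^{\cup\#}=f^{\cup}$ and $f^{\#\#}=f^{\#}$ follow by combining this with $(C^{\#})^{\#}=C^{\#}$ and the computation $\operatorname{dom}f^{\cup}\subseteq(\operatorname{dom}f^{\cup})^{\#}$. Statement (viii) is then immediate from (iv) (the forward direction is trivial; the converse uses that the subdifferentials agree at every common subdifferentiability point).

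For (iii) and (vi), the key input from \cite{mmnc} is that portable hulls satisfy $(\iota_{C})^{\cup}=\iota_{C^{\#}}$ and that $N_{C}$ is maximal monotone iff $C$ is portable, i.e.\ $C=C^{\#}$. Translated epigraphically, $(\operatorname{Epi}f)^{\#}$ is the smallest portable subset of $X\times\mathbb{R}$ containing $\operatorname{Epi}f$; the affine minorants coming from $\partial f$ give supporting half-spaces of $\operatorname{Epi}f$ whose intersection, cut by the vertical slab $(\operatorname{dom}f)^{\#}\times\mathbb{R}$, equals $\operatorname{Epi}(f^{\#})$. This yields (iii), and then (vi) reads off: $N_{\operatorname{Epi}f}\in\mathfrak{M}$ iff $\operatorname{Epi}f=(\operatorname{Epi}f)^{\#}=\operatorname{Epi}(f^{\#})$ iff $f=f^{\#}$. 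Part (vii) is (v) plugged into (vi). Finally for (ix), $f^{n\cup}\ge f^{\cup}$ by taking $a_{1}=\cdots=a_{n}$ in the sup; the reverse inequality $f^{n\cup}\le f^{(n-1)\cup}$ is the telescoping $\langle a_{n-1}-a_{n},a_{n}^{*}\rangle+f(a_{n})\le f(a_{n-1})$ applied to the last two terms, and iterating $n-1$ times gives $f^{n\cup}\le f^{\cup}$.

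The main obstacle I anticipate is the bookkeeping in (iii) and (vi): identifying $(\operatorname{Epi}f)^{\#}$ with $\operatorname{Epi}(f^{\cup})\cap((\operatorname{dom}f)^{\#}\times\mathbb{R})$ and with $\operatorname{Epi}(f^{\#})$ rests on the precise definition and permanence properties of the portable hull from \cite{mmnc}, so the proof must import those results cleanly; all other items reduce to transparent manipulations of the subgradient inequality and the defining sup for $f^{\cup}$.
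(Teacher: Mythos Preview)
The paper states this theorem without proof, so there is no argument to compare against directly; your proposal supplies the natural verifications and is essentially correct throughout. The core steps---the subgradient inequality for (i), (iv), (ix); the supremum-of-affines structure for (ii); the decomposition of supporting half-spaces to $\operatorname{Epi}f$ into vertical (giving $(\operatorname{dom}f)^{\#}\times\mathbb{R}$) and non-vertical (giving $\operatorname{Epi}(f^{\cup})$) for (iii); and the import of $N_{C}\in\mathfrak{M}\Leftrightarrow C=C^{\#}$ from \cite{mmnc} for (vi), (vii)---are exactly the intended ones, and the Remark following the theorem in the paper (on $\operatorname{Epi}(f^{\cup})=(\operatorname{Epi}f)_{G}^{\#}$) confirms that your reading of (iii) matches the author's.

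Two small points. First, your opening summary lists (vii) among the items obtained from (iv), but in the body you (correctly) derive (vii) from (v) together with (vi); make the dependency order explicit so the reader does not suspect circularity. Second, your justification of $f^{\#\#}=f^{\#}$ via ``$(C^{\#})^{\#}=C^{\#}$ and $\operatorname{dom}f^{\cup}\subseteq(\operatorname{dom}f^{\cup})^{\#}$'' is too compressed: spell out that $f^{\#\#}\le f^{\#}$ is (i) applied to $f^{\#}$, while for $\ge$ one uses $(\operatorname{dom}f^{\#})^{\#}\subset((\operatorname{dom}f)^{\#})^{\#}=(\operatorname{dom}f)^{\#}$ to handle points outside $(\operatorname{dom}f)^{\#}$, and $\operatorname{Graph}\partial f\subset\operatorname{Graph}\partial f^{\#}$ with $f^{\#}=f$ on $D(\partial f)$ to get $(f^{\#})^{\cup}\ge f^{\cup}$ on $(\operatorname{dom}f)^{\#}$. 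Alternatively, once (iii) is in hand the identity is immediate from idempotence of the portable hull on $\operatorname{Epi}f$, so you may prefer to reorder and place (iii) before (v).
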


Let us call a function $f$ \emph{portable} if $f=f^{\#}$. As previously
seen
\begin{itemize}
\item For every $f$, the envelopes $f^{\cup}$, $f^{\#}$ are portable. 
\item $f$ is portable iff $\operatorname*{Epi}f$ is portable.
\end{itemize}
\begin{remark} Note that $\operatorname*{Epi}(f^{\cup})$ is the
intersection of all the supporting semi-spaces that contain $\operatorname*{Epi}f$
determined by non-vertical support hyperplanes to $\operatorname*{Epi}f$,
that is, $\operatorname*{Epi}(f^{\cup})$ has the form of a partial
portable hull of $\operatorname*{Epi}f$ and is given by $(x,v)\in\operatorname*{Epi}(f^{\cup})$
iff
\begin{equation}
\forall a\in\operatorname*{dom}f,\ \forall(a^{*},\alpha)\in N_{\operatorname*{Epi}f}(a,f(a))\ {\rm with}\ \alpha\neq0,\ \langle(x,v)-(a,f(a)),(a^{*},\alpha)\rangle\le0,\label{epi-fu}
\end{equation}
or $\operatorname*{Epi}(f^{\cup})=(\operatorname*{Epi}f)_{G}^{\#}$,
where $G:=\{(a,\lambda;a^{*},\alpha)\in\operatorname*{Graph}(N_{\operatorname*{Epi}f})\mid\lambda=f(a),\ \alpha\neq0\}$
(see \cite{mmnc}). \end{remark} 

With respect to the dual system $(X,X^{*})$ the dual notion of the
\emph{upper envelope} for $g:X^{*}\to\overline{\mathbb{R}}$ is similarly
defined as 
\begin{equation}
g^{\cup}(x^{*}):=\sup\{\langle x^{*}-a^{*},a\rangle+g(a^{*})\mid(a^{*},a)\in\operatorname*{Graph}\partial g\}=\sup\{\langle x^{*},a\rangle-g^{*}(a)\mid a\in R(\partial g)\}.\label{diez-in-X*}
\end{equation}
Naturally, for $f:X\to\overline{\mathbb{R}}$, one can consider
\[
f^{*\cup}:=(f^{*})^{\cup},\ f^{\cup*}:=(f^{\cup})^{*},\ f^{\circ}:=f^{*\cup*}=(f^{*\cup})^{*}.
\]
 These functions are proper iff $\operatorname*{Graph}\partial f\neq\emptyset$
in which case their expanded forms are 
\begin{equation}
\begin{aligned}f^{*\cup}(x^{*}) & =\sup\{\langle x^{*}-a^{*},a\rangle+f^{*}(a^{*})\mid(a,a^{*})\in\operatorname*{Graph}\partial f\}\\
 & =\sup\{\langle x^{*},a\rangle-f(a)\mid a\in D(\partial f)\}=(f+\iota_{D(\partial f)})^{*}(x^{*});
\end{aligned}
\label{*diez}
\end{equation}

\[
f^{\circ}=(f+\iota_{D(\partial f)})^{**}=\operatorname*{cl}\operatorname*{conv}(f+\iota_{D(\partial f)}).
\]
In particular $f^{*\cup}(0)=-\inf_{D(\partial f)}f$. 

\begin{remark} The parallelism between the normal cone and the subdifferential
stops in two fundamental instances.

First, one cannot expect the Fitzpatrick function of a subdifferential
to have separated variables in general. Even though, for every $f:X\to\overline{\mathbb{R}}$,
$x\in X$, $x^{*}\in X^{*}$
\begin{equation}
\varphi_{\partial f}(x,x^{*})\le f^{\cup}(x)+f^{*\cup}(x^{*});\label{impr-ineq}
\end{equation}
there exists examples for which the inequality in (\ref{impr-ineq})
can be strict. Take $f(x)=\tfrac{1}{2}x^{2}$, $x\in\mathbb{R}$ for
which $\operatorname*{Graph}(\partial f)=\{(x,x)\mid x\in\mathbb{R}\}$,
$\partial f\in\mathfrak{M}(\mathbb{R})$, and $\varphi_{\partial f}(x,y)=\frac{1}{4}(x+y)^{2}$,
$x,y\in\mathbb{R}$, while $f^{\cup}(x)=f^{*}(x)=f(x)=\tfrac{1}{2}x^{2}$,
$x\in\mathbb{R}$. 

Second, even though $f^{\#\#}=f^{\#}$, $f^{\cup\cup}=f^{\cup}$ one
cannot expect that $\partial f^{\cup}\in\mathfrak{M}(X)$ or $\partial f^{\#}\in\mathfrak{M}(X)$
. Note that in every LCS $X$ and for every $C\subset X$, $\sigma_{C}:(X^{*},w^{*})\to\overline{\mathbb{R}}$
has $\sigma_{C}=\sigma_{C}^{\cup}=\sigma_{C}^{\#}$ while $\partial\sigma_{C}^{(\cup)(\#)}=N_{C}{}^{-1}\in\mathfrak{M}(X^{*})$
iff $C=C^{\#}$ (see \cite{mmnc}). 

Indeed, for every $x^{*}\in X^{*}$ 
\[
\sigma_{C}^{\cup}(x^{*})=\sup\{\langle x^{*}-a^{*},a\rangle+\sigma_{C}(a^{*})\mid(a,a^{*})\in\operatorname*{Graph}N_{C}\}\ge\sigma_{C}(x^{*}),
\]
obtained by taking $a\in C$, $a^{*}=0$. That discrepancy comes from
the fact that, in general, $\sigma_{C}^{(\cup)}\neq\sigma_{C^{\#}}$. 

Notice, in passing that, for $f=\iota_{C}$ or $f=\sigma_{C}$ for
$C$ closed convex, (\ref{impr-ineq}) becomes an identity. \end{remark}

\begin{theorem} \label{fcirc} Let $(X,\tau)$ be a LCS and let $f:X\to\overline{\mathbb{R}}$
be such that $\operatorname*{Graph}\partial f\neq\emptyset$. Then 

\medskip

\emph{(i)} $f^{\cup}\le\operatorname*{cl}\operatorname*{conv}f\le f^{\circ}\in\Gamma_{\tau}(X)$
and, for every $x\in D(\partial f)$, $f^{\circ}(x)=f(x)$;

\medskip

\emph{(ii)} $f^{\circ\circ}=f^{\circ}$, $f^{*\circ*}=f^{\cup}$,
$f^{*\cup}=f^{\circ*}$, $f^{*\circ}=f^{\cup*}$; 

\medskip

\emph{(iii)} For every $x\in X$, $\partial f(x)=\partial f^{\circ}(x)\cap R(\partial f)$;
in particular $\operatorname*{Graph}\partial f\subset\operatorname*{Graph}\partial f^{\circ}$; 

\medskip

\emph{(iv) $x\in D(\partial f)$ iff $\partial f^{\circ}(x)\cap R(\partial f)\neq\emptyset$; }

\medskip

\emph{(v)} $\partial f=\partial f^{\circ}$ iff $R(\partial f)=R(\partial f^{\circ})$.
\end{theorem}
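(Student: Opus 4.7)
The plan is to first unpack the envelopes into closed form: from (\ref{*diez}) one reads off $f^{*\cup}=(f+\iota_{D(\partial f)})^{*}$, so $f^{\circ}=f^{*\cup *}=(f+\iota_{D(\partial f)})^{**}=\operatorname{cl}\operatorname{conv}(f+\iota_{D(\partial f)})$; dually, $f^{\cup}=(f^{*}+\iota_{R(\partial f)})^{*}$ directly from the second expression in (\ref{eq:}). With these identifications in hand, the entire theorem reduces to standard biconjugate manipulations combined with the idempotency $g^{\cup\cup}=g^{\cup}$ furnished by Theorem \ref{f cup diez}(v).

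For (i), every affine minorant in the definition of $f^{\cup}$ lies below $f$, hence below $\operatorname{cl}\operatorname{conv}f=f^{**}$, so $f^{\cup}\le f^{**}$; monotonicity of the biconjugate under $f\le f+\iota_{D(\partial f)}$ then yields $f^{**}\le f^{\circ}$. Properness and $\tau$-lower semi-continuity of $f^{\circ}$ come from the sandwich with the proper $f^{\cup}$ (Theorem \ref{f cup diez}(ii)) together with $f^{\circ}(a)\le(f+\iota_{D(\partial f)})(a)=f(a)<\infty$ at any $a\in D(\partial f)$. The pointwise equality $f^{\circ}=f$ on $D(\partial f)$ is the sandwich $f(x)=f^{\cup}(x)\le f^{\circ}(x)\le(f+\iota_{D(\partial f)})(x)=f(x)$, using Theorem \ref{f cup diez}(i). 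For (ii), since $f^{*\cup}$ is already a conjugate, applying $*$ to $f^{\circ}=f^{*\cup *}$ and invoking $g^{**}=g$ for proper convex lsc $g$ gives $f^{\circ *}=f^{*\cup}$; iterating, $f^{\circ\circ}=(f^{\circ})^{*\cup *}=(f^{*\cup})^{\cup *}$, and $(f^{*\cup})^{\cup}=f^{*\cup}$ by Theorem \ref{f cup diez}(v) applied to $f^{*}$, so $f^{\circ\circ}=f^{\circ}$. The dual identities $f^{*\circ}=f^{\cup *}$ and $f^{*\circ *}=f^{\cup}$ follow by the symmetric argument starting from $f^{\cup}=(f^{*}+\iota_{R(\partial f)})^{*}$.

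For (iii), I would establish $\operatorname{Graph}\partial f\subset\operatorname{Graph}\partial f^{\circ}$ by transferring Fenchel equality: given $(x,x^{*})\in\operatorname{Graph}\partial f$, part (i) supplies $f^{\circ}(x)=f(x)$, while $(f+\iota_{D(\partial f)})^{*}(x^{*})=f^{*}(x^{*})$ whenever $x^{*}\in R(\partial f)$ (the sup defining $f^{*}(x^{*})$ is attained in $D(\partial f)$), so $f^{\circ *}(x^{*})=f^{*}(x^{*})$ and therefore $f^{\circ}(x)+f^{\circ *}(x^{*})=\langle x,x^{*}\rangle$. For the reverse inclusion $\partial f^{\circ}(x)\cap R(\partial f)\subset\partial f(x)$, the same identity $f^{\circ *}=f^{*}$ on $R(\partial f)$ rewrites the Fenchel equality for $f^{\circ}$ at $(x,x^{*})$ as $f^{\circ}(x)+f^{*}(x^{*})=\langle x,x^{*}\rangle$, whence one extracts $f(x)=f^{\circ}(x)$ from the sandwich between $f^{**}$, $f^{\circ}$ and $f$ combined with the attainment structure forced by $x^{*}\in R(\partial f)$, placing $x^{*}\in\partial f(x)$. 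Parts (iv) and (v) are then immediate consequences of (iii) by projecting the graph equality onto $X$ and $X^{*}$ respectively. The principal obstacle I expect lies precisely in this reverse direction of (iii): pinning down $f(x)=f^{\circ}(x)$ using only the biconjugate-level hypothesis $x^{*}\in\partial f^{\circ}(x)\cap R(\partial f)$ is delicate, because (i) only guarantees $f^{\circ}=f$ on $D(\partial f)$, so one has to argue that the hypothesis itself forces $x$ into $D(\partial f)$ via the structure of the attaining sequence in (\ref{*diez}).
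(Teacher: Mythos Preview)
The paper states Theorem~\ref{fcirc} without proof, so there is no argument of the paper's to compare against; what follows is an assessment of your outline on its own merits.

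Your treatment of (i), of (ii), of the forward inclusion in (iii), and of the reductions (iv)--(v) is correct and rests on exactly the right identifications $f^{\circ}=(f+\iota_{D(\partial f)})^{**}$, $f^{\cup}=(f^{*}+\iota_{R(\partial f)})^{*}$, $f^{\circ *}=f^{*\cup}$, together with $g^{\cup\cup}=g^{\cup}$ from Theorem~\ref{f cup diez}(v). One minor caution in (ii): when you pass to the ``symmetric'' identities $f^{*\circ}=f^{\cup *}$ and $f^{*\circ *}=f^{\cup}$, the operation $(\cdot)^{\circ}$ applied to $f^{*}$ is built from $D(\partial f^{*})=R(\partial f^{**})$, which in general properly contains $R(\partial f)$; a line is needed to check that the extra points contribute nothing (this comes down to $f^{***}=f^{*}$ together with $\operatorname{Graph}\partial f\subset\operatorname{Graph}\partial f^{**}$).

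The obstacle you flag in the reverse inclusion of (iii) is genuine, and your sketch does not close it. From $x^{*}\in\partial f^{\circ}(x)\cap R(\partial f)$ you correctly obtain $f^{\circ}(x)+f^{*}(x^{*})=\langle x,x^{*}\rangle$ (since $f^{\circ *}=f^{*\cup}=f^{*}$ on $R(\partial f)$), and Fenchel--Young gives $f(x)\ge f^{\circ}(x)$. What is missing is $f(x)\le f^{\circ}(x)$; the only available lower bound is $f^{\circ}\ge f^{**}$, which suffices precisely when $f(x)=f^{**}(x)$. Hence your argument is complete for $f\in\Gamma_{\tau}(X)$, but for an arbitrary $f$ with $\operatorname{Graph}\partial f\neq\emptyset$ the inclusion can fail. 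For instance, on $X=\mathbb{R}$ set $f(0)=0$, $f(1)=2$, $f=+\infty$ elsewhere: then $D(\partial f)=\{0,1\}$, $R(\partial f)=\mathbb{R}$, $f^{\circ}=f^{**}=2x+\iota_{[0,1]}$, and at $x=\tfrac12$ one has $\partial f^{\circ}(\tfrac12)\cap R(\partial f)=\{2\}$ while $\partial f(\tfrac12)=\emptyset$. So the ``attainment structure forced by $x^{*}\in R(\partial f)$'' that you invoke cannot by itself supply the missing inequality; either (iii) is meant under the tacit hypothesis $f=f^{**}$ (consistent with the paper's overall focus on $\Gamma_{\tau}(X)$), in which case your proof is complete once that is added, or the assertion as literally written requires qualification.
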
 

\begin{remark} Given $(X,\tau)$ a LCS and $f:X\to\overline{\mathbb{R}}$
such that $\operatorname*{Graph}\partial f\neq\emptyset$ the function
$g=f+\iota_{D(\partial f)}$ satisfies $\operatorname*{dom}g=D(\partial f)=D(\partial g)$,
$g^{\circ}=f^{\circ}=\operatorname*{cl}\operatorname*{conv}g\le g$
whilst $g^{\circ}\neq g$ is possible, e.g., when $D(\partial f)$
is not convex. 

Also, one has
\[
f^{\cup}\le(\operatorname*{cl}\operatorname*{conv}f)^{\cup}\le\operatorname*{cl}\operatorname*{conv}f\le f,
\]
with the possibility that $f^{\cup}\neq(\operatorname*{cl}\operatorname*{conv}f)^{\cup}$.
For example take $f=\iota_{C}$, for which $(\operatorname*{cl}\operatorname*{conv}C)^{\#}\subsetneq C^{\#}$,
e.g., $C$ is open convex in a Banach space for which $C^{\#}=X$,
$\operatorname*{cl}\operatorname*{conv}C=(\operatorname*{cl}\operatorname*{conv}C)^{\#}=\overline{C}$
while $f^{\#}=f^{\cup}=\iota_{X}$, $(\operatorname*{cl}\operatorname*{conv}f)^{\#}=(\operatorname*{cl}\operatorname*{conv}f)^{\cup}=f^{\circ}=\iota_{\overline{C}}$,
$\operatorname*{Graph}\partial f=C\times\{0\}$, $\operatorname*{Graph}(\partial f^{\cup})=X\times\{0\}$,
 $\partial f^{\circ}=N_{\overline{C}}$. 

In general for $C$ non-empty closed convex $\iota_{C}^{\circ}=\iota_{C}$
and $\partial\iota_{C}^{(\circ)}=N_{C}$ which can be non-maximal
monotone (see e.g. \cite{MR1216811,MR0092113}). \end{remark}

\strut

As a prelude to the remaining results of this paper we reprove a known
result \cite[Theorem\ 9,\ p.\ 282]{MR3492118} in order to reveal
a general argument for proving the maximality of the subdifferential
(see Theorem \ref{max-sd} below) and the necessity for the introduction
of another type of envelope in a natural way (see (\ref{fsp}) below). 

\begin{theorem} \label{f-cont} Let $(X,\tau)$ be a LCS and let
$f\in\Gamma(X)$ be continuous on $\operatorname*{int}(\operatorname*{dom}f)$.
Then $\partial f\in\mathfrak{M}(X)$. \end{theorem}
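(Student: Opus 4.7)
The plan is to verify the two hypotheses of Theorem \ref{max-sd}: that $\partial f$ is representable and of type NI. Representability is handled by the separable representative $h(x,x^{*}):=f(x)+f^{*}(x^{*})$, which is convex and $\tau\times w^{*}$-lower semicontinuous on $X\times X^{*}$, dominates the coupling by Young--Fenchel, and achieves equality with $c(x,x^{*})=\langle x,x^{*}\rangle$ precisely on $\operatorname*{Graph}\partial f$. Continuity of $f$ at any $\bar{x}\in\operatorname*{int}(\operatorname*{dom}f)$ supplies $\partial f(\bar{x})\neq\emptyset$ via standard Hahn--Banach separation of $\operatorname*{Epi}f$; in particular $\operatorname*{Graph}\partial f\neq\emptyset$ and Theorem \ref{dfdom} gives $f=f^{**}$.

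The substantive task is the NI inequality $\varphi_{\partial f}(x_{0},x_{0}^{*})\geq\langle x_{0},x_{0}^{*}\rangle$ for arbitrary $(x_{0},x_{0}^{*})\in X\times X^{*}$. I would probe $\varphi_{\partial f}$ along the segment $a_{t}:=(1-t)\bar{x}+tx_{0}$. By the accessibility lemma, $a_{t}\in\operatorname*{int}(\operatorname*{dom}f)$ on a maximal interval $[0,s^{*})$ with $s^{*}\in(0,1]$, and there $\partial f(a_{t})\neq\emptyset$; pick $a_{t}^{*}\in\partial f(a_{t})$. The subgradient inequality at $\bar{x}$ gives the uniform lower bound
\[
\langle x_{0}-a_{t},a_{t}^{*}\rangle=(1-t)\langle x_{0}-\bar{x},a_{t}^{*}\rangle\geq\frac{1-t}{t}\bigl(f(a_{t})-f(\bar{x})\bigr),
\]
and, when $x_{0}\in\operatorname*{dom}f$, the subgradient inequality at $x_{0}$ furnishes the matching upper bound $\langle x_{0}-a_{t},a_{t}^{*}\rangle\leq f(x_{0})-f(a_{t})$.

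If $x_{0}\in\operatorname*{dom}f$ (so $s^{*}\geq 1$), convexity of $t\mapsto f(a_{t})$ together with lower semicontinuity at $x_{0}$ forces $f(a_{t})\to f(x_{0})$ as $t\to 1^{-}$; both brackets collapse to $0$, and combined with $\langle a_{t},x_{0}^{*}\rangle\to\langle x_{0},x_{0}^{*}\rangle$ we obtain $\varphi_{\partial f}(x_{0},x_{0}^{*})\geq\lim[\langle x_{0}-a_{t},a_{t}^{*}\rangle+\langle a_{t},x_{0}^{*}\rangle]=\langle x_{0},x_{0}^{*}\rangle$. If $x_{0}\in\overline{\operatorname*{dom}f}\setminus\operatorname*{dom}f$ (again $s^{*}=1$), lsc forces $f(a_{t})\to+\infty$ but the lower bound stays eventually non-negative, so the $\liminf$ of $\langle x_{0}-a_{t},a_{t}^{*}\rangle+\langle a_{t},x_{0}^{*}\rangle$ is still $\geq\langle x_{0},x_{0}^{*}\rangle$, possibly equal to $+\infty$.

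The main obstacle is the remaining case $x_{0}\notin\overline{\operatorname*{dom}f}$, where $s^{*}<1$ and the segment fails to reach $x_{0}$. Here I would use Hahn--Banach to strictly separate $x_{0}$ from $\overline{\operatorname*{dom}f}$ by some $y^{*}\in X^{*}$, and exploit that the directional derivative $f'(a_{t};x_{0}-\bar{x})=\sup\{\langle x_{0}-\bar{x},a^{*}\rangle:a^{*}\in\partial f(a_{t})\}$ along the line toward $x_{0}$ blows up to $+\infty$ as $t\to s^{*-}$ whenever $f(a_{s^{*}})=+\infty$; a judicious selection of $a_{t}^{*}\in\partial f(a_{t})$ then yields $\varphi_{\partial f}(x_{0},x_{0}^{*})=+\infty$. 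When $a_{s^{*}}\in\operatorname*{dom}f$ lies on the boundary with $f$ merely jumping to $+\infty$ just past it, the derivative argument must be supplemented by a normal-cone perturbation in the separating direction $y^{*}$ to again force $\varphi_{\partial f}(x_{0},x_{0}^{*})=+\infty$. Precisely this final difficulty signals that the envelopes $f^{\cup},f^{\#},f^{\circ}$ introduced so far are inadequate and motivates the further envelope defined in (\ref{fsp}) below.
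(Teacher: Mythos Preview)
Your segment argument for $x_{0}\in\overline{\operatorname*{dom}f}$ is correct and pleasantly direct: the two-sided estimate on $\langle x_{0}-a_{t},a_{t}^{*}\rangle$ combined with $f(a_{t})\to f(x_{0})$ (resp.\ $+\infty$) gives $\varphi_{\partial f}(x_{0},x_{0}^{*})\ge\langle x_{0},x_{0}^{*}\rangle$ without any of the level-set machinery the paper employs. The paper's route for this case is different: it first uses $\operatorname*{int}(\operatorname*{Epi}f)\neq\emptyset$ to get $f=f^{\#}$ and then, for each $x\in\operatorname*{dom}f$, applies this identity to $f+\iota_{M_{f(x)}}$ together with the sum rule $\partial(f+\iota_{M_{f(x)}})=\partial f+N_{M_{f(x)}}$, reading off $\varphi_{\partial f}(x,0)\ge0$ and shifting by $x^{*}$.

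The genuine gap is your Case 3 ($x_{0}\notin\overline{\operatorname*{dom}f}$), specifically sub-case 3b where $a_{s^{*}}\in\operatorname*{dom}f$. Your ``normal-cone perturbation in the separating direction $y^{*}$'' is not an argument: at interior points $a_{t}$ one has $N_{\operatorname*{dom}f}(a_{t})=\{0\}$, so no perturbation is available there, and at $a_{s^{*}}$ itself $\partial f(a_{s^{*}})$ may well be empty since $f$ is only assumed continuous on $\operatorname*{int}(\operatorname*{dom}f)$. Even when $\partial f(a_{s^{*}})\neq\emptyset$, you would still need a supporting functional $n^{*}\in N_{\overline{\operatorname*{dom}f}}(a_{s^{*}})$ with $\langle x_{0}-a_{s^{*}},n^{*}\rangle>0$, which is not automatic from separation of $x_{0}$ alone. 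More importantly, your concluding remark is misleading: the envelope $f^{\smallsmile}$ of (\ref{fsp}) is \emph{not} what the paper uses to close this gap. The paper instead passes to the inf-convolution $f\,\square\,\sigma_{C^{*}}$ with a suitably chosen weak-star closed $C^{*}\ni x_{0}^{*}$, so that $x_{0}\in\operatorname*{dom}(f\,\square\,\sigma_{C^{*}})$ while $f\,\square\,\sigma_{C^{*}}$ remains in the class covered by (\ref{figc}); the calculus rule for $\partial(f\,\square\,\sigma_{C^{*}})$ then transfers $\varphi_{\partial(f\,\square\,\sigma_{C^{*}})}(x_{0},x_{0}^{*})\ge\langle x_{0},x_{0}^{*}\rangle$ back to $\varphi_{\partial f}(x_{0},x_{0}^{*})$. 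The envelope $f^{\smallsmile}$ is introduced afterwards, for other purposes (Theorems \ref{max-sd-sp} and \ref{fspeps}), and the present theorem is already fully proved without it.
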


\begin{proof} Since $\operatorname*{int}(\operatorname*{Epi}f)\neq\emptyset$
we know that $\operatorname*{Epi}f=(\operatorname*{Epi}f)^{\#}=\operatorname*{Epi}(f^{\#})$,
i.e., $f=f^{\#}=f^{\cup}+\iota_{(\operatorname*{dom}f)^{\#}}$. In
particular
\begin{equation}
\forall f\in\Gamma(X),\operatorname*{int}(\operatorname*{Epi}f)\neq\emptyset,\ \forall x\in\operatorname*{dom}f,\ f(x)=f^{\cup}(x).\label{ffd}
\end{equation}

Denote the closed convex level sets of $f$ by $M_{\lambda}:=\{x\in X\mid f(x)\le\lambda\}$,
$\lambda\in\mathbb{R}$. 

If $f=\iota_{C}$ for some closed convex $C\subset X$ with $\operatorname*{int}C\neq\emptyset$
then $\partial f=N_{C}\in\mathfrak{M}(X)$ (see \cite[Corollary 4]{mmnc})
and we are done. 

Assume that $f$ is non-constant on $\operatorname*{dom}f$. For every
$x\in\operatorname*{dom}f$ such that $f(x)>\inf_{X}f$, $\operatorname*{int}M_{f(x)}\neq\emptyset$;
in particular $\partial(f+\iota_{M_{f(x)}})=\partial f+N_{M_{f(x)}}$.
Similarly, $\operatorname*{int}(\operatorname*{Epi}(f+\iota_{M_{f(x)}})\neq\emptyset$,
$f+\iota_{M_{f(x)}}\in\Gamma(X)$ together with (\ref{ffd}) and the
fact that, due to $x\in M_{f(x)}$ every $n^{*}\in N_{M_{f(x)}}(a)$
has $\langle x-a,n^{*}\rangle\le0$, imply 
\[
\begin{aligned}f(x) & =(f+\iota_{M_{f(x)}})(x)=(f+\iota_{M_{f(x)}})^{\cup}(x)\\
 & =\sup\{\langle x-a,a^{*}+n^{*}\rangle+f(a)\mid(a,a^{*})\in\operatorname*{Graph}\partial f,\ n^{*}\in N_{M_{f(x)}}(a),\ f(a)\le f(x)\}\\
 & =\sup\{\langle x-a,a^{*}\rangle+f(a)\mid(a,a^{*})\in\operatorname*{Graph}\partial f,\ f(a)\le f(x)\}\\
 & \le\varphi_{\partial f}(x,0)+f(x).
\end{aligned}
\]
Therefore, for every $x\in\operatorname*{dom}f$, $\varphi_{\partial f}(x,0)\ge0$.
The same considerations applied to $f-x^{*}$ for an arbitrary $x^{*}\in X^{*}$
show in general that
\begin{equation}
\begin{aligned}\forall f\in\Gamma(X),\ \operatorname*{int}(\operatorname*{Epi}f)\neq\emptyset, & \forall x\in\operatorname*{dom}f,\ x^{*}\in X^{*},\\
\varphi_{\partial f}(x,x^{*})= & \varphi_{\partial(f-x^{*})}(x,0)+\langle x,x^{*}\rangle\ge\langle x,x^{*}\rangle.
\end{aligned}
\label{figc}
\end{equation}

Fix $x_{0}\in\operatorname*{dom}f$, $x_{0}^{*}\in\operatorname*{dom}f^{*}$.
For every $x\in X$, $x^{*}\in X^{*}$ take $C^{*}$ a closed convex
weak-star neighborhood of $x_{0}^{*}$ that contains $x^{*}$ and
$x-x_{0}\in\operatorname*{dom}\sigma_{C^{*}}$; such as 
\[
C^{*}:=x_{0}^{*}+\{y^{*}\in X^{*}\mid|\langle x-x_{0},y^{*}\rangle|\le\max\{1,|\langle x-x_{0},x^{*}-x_{0}^{*}\rangle|\}\}.
\]
Then $f\square\sigma_{C^{*}}=(f^{*}+\iota_{C^{*}})^{*}$ since $\iota_{C^{*}}$
is continuous at $x_{0}^{*}\in\operatorname*{dom}f^{*}$, $\operatorname*{dom}(f\square\sigma_{C^{*}})=\operatorname*{dom}f+\operatorname*{dom}\sigma_{C^{*}}\ni x$,
$\operatorname*{int}(\operatorname*{Epi}(f\square\sigma_{C^{*}}))\neq\emptyset$,
and $(a,a^{*})\in\partial(f\square\sigma_{C^{*}})$ iff $a\in\partial f^{*}(a^{*})+N_{C^{*}}(a^{*})$,
$a^{*}\in R(\partial f)\cap C^{*}$ (see e.g. \cite[Theorem\ 2.8.7,\ p.\ 126]{MR1921556}).
Then, according to (\ref{figc})
\[
\langle x,x^{*}\rangle\le\varphi_{\partial(f\square\sigma_{C^{*}})}(x,x^{*})=\sup\{\langle x^{*}-a^{*},a\rangle+\langle x,a^{*}\rangle\mid(a,a^{*})\in\partial(f\square\sigma_{C^{*}})\}
\]
\[
=\sup\{\langle x^{*}-a^{*},b\rangle+\langle x^{*}-a^{*},n\rangle+\langle x,a^{*}\rangle\mid a^{*}\in R(\partial f)\cap C^{*},a^{*}\in\partial f(b),n\in N_{C^{*}}(a^{*})\}
\]
\[
=\sup\{\langle x^{*}-a^{*},b\rangle+\langle x,a^{*}\rangle\mid a^{*}\in R(\partial f)\cap C^{*},a^{*}\in\partial f(b)\}\le\varphi_{\partial f}(x,x^{*}).
\]
We showed that $\partial f$ is of NI-type so $\partial f\in\mathfrak{M}(X)$
since $\partial f$ is representable; a representative being given
by $F(x,x^{*})=f(x)+f^{*}(x^{*})$, $x\in X$, $x^{*}\in X^{*}$;
$F\in\Gamma_{\tau\times w^{*}}(X\times X^{*})$. \end{proof}

\strut

As we have seen in the previous proof, for the convex subdifferential
the characterization of maximal monotonicity \cite[Theorem\ 2.3]{MR2207807}
takes the following special form. 

\begin{theorem} \label{max-sd} Let $(X,\tau)$ be a LCS and let
$f\in\Gamma_{\tau}(X)$. The following are equivalent:

\medskip

\noindent \emph{(i)} $\partial f\in\mathfrak{M}(X)$, 

\medskip

\noindent \emph{(ii)} under review 

\medskip

\noindent \emph{(iii)} For every $(x,x^{*})\in X\times X^{*}$, $\varphi_{\partial f}(x,x^{*})\ge\langle x,x^{*}\rangle$
or $\partial f$ is of NI\textendash type. \end{theorem}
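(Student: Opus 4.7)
The plan is to reduce (i)$\Leftrightarrow$(iii) to the general characterization of maximal monotonicity from \cite[Theorem 2.3]{MR2207807} that was already invoked at the end of the proof of Theorem \ref{f-cont}: a monotone operator $T:X\rightrightarrows X^{*}$ belongs to $\mathfrak{M}(X)$ iff $T$ is representable and of NI--type. Since $\partial f$ is automatically monotone for $f$ convex, my task is to show that $\partial f$ is always representable whenever $f\in\Gamma_{\tau}(X)$, so that the maximality in (i) becomes equivalent to the NI condition in (iii).

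Representability is witnessed by $F:X\times X^{*}\to\overline{\mathbb{R}}$, $F(x,x^{*}):=f(x)+f^{*}(x^{*})$. Since $f\in\Gamma_{\tau}(X)$ and one may assume $\operatorname*{Graph}(\partial f)\neq\emptyset$ (both (i) and (iii) force this by the conventions on $\mathfrak{M}(X)\subset\mathcal{M}(X)$ and $\sup\emptyset=-\infty$), the functions $f$ and $f^{*}$ are proper lsc convex on $(X,\tau)$ and $(X^{*},w^{*})$ respectively, whence $F\in\Gamma_{\tau\times w^{*}}(X\times X^{*})$. The Fenchel--Young inequality delivers $F\ge c$ on $X\times X^{*}$ with equality exactly on $\operatorname*{Graph}(\partial f)$, which is precisely the defining property of a representative of $\partial f$.

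The implication (i)$\Rightarrow$(iii) is Fitzpatrick's original observation from \cite{MR1009594}: if a pair $(x_{0},x_{0}^{*})$ violated $\varphi_{\partial f}(x_{0},x_{0}^{*})\ge\langle x_{0},x_{0}^{*}\rangle$, then unfolding the supremum defining $\varphi_{\partial f}$ in (\ref{ff}) yields $\langle x_{0}-a,a^{*}-x_{0}^{*}\rangle\ge 0$ for every $(a,a^{*})\in\operatorname*{Graph}\partial f$, so that $(x_{0},x_{0}^{*})$ is monotonically related to the whole graph without belonging to it (else $\varphi_{\partial f}(x_{0},x_{0}^{*})\ge\langle x_{0},x_{0}^{*}\rangle$ by plugging $(a,a^{*})=(x_{0},x_{0}^{*})$), contradicting maximality. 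Conversely, (iii)$\Rightarrow$(i) is the general theorem applied to the operator $\partial f$, now known to be both representable (by the paragraph above) and of NI--type (by assumption).

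I do not expect any serious obstacle: the content of Theorem \ref{max-sd} is that the general representability$+$NI characterization collapses to the single NI condition when specialized to convex subdifferentials, because representability is automatic from the Fenchel--Young identity. The only subtlety is the properness of $F$, handled by the nonempty-graph convention on $\mathfrak{M}(X)$. Clause (ii), marked ``under review'' in the statement, I would leave aside; whatever intermediate reformulation is planned (plausibly a Minty-type surjectivity property or ``$\partial f$ is representable and NI'') will slot between (i) and (iii) without altering the argument above.
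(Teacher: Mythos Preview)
Your proposal is correct and matches the paper's own approach: the paper does not give a separate proof of Theorem~\ref{max-sd} but simply presents it as the specialization of \cite[Theorem~2.3]{MR2207807} to $\partial f$, the representability being the very observation $F(x,x^{*})=f(x)+f^{*}(x^{*})\in\Gamma_{\tau\times w^{*}}(X\times X^{*})$ made at the end of the proof of Theorem~\ref{f-cont}. Your handling of the nonempty-graph issue and of clause (ii) is appropriate.
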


\begin{proposition} \label{max->cup} Let $(X,\tau)$ be a LCS and
let $f\in\Gamma_{\tau}(X)$. If $\partial f\in\mathfrak{M}(X)$ then
$f=f^{\cup}$ and $f^{*}=f^{*\cup}$. \end{proposition}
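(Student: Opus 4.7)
The plan is to combine the pointwise bound (\ref{impr-ineq}) with the NI characterization of maximal monotonicity from Theorem \ref{max-sd}, and then invert the resulting inequality via Fenchel conjugation.

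First, since $\partial f\in\mathfrak{M}(X)$ forces $\operatorname*{Graph}\partial f\neq\emptyset$, Theorem \ref{f cup diez}(ii) together with the expanded form (\ref{*diez}) guarantees that $f^{\cup}$ and $f^{*\cup}$ are proper lsc convex functions on $X$ and on $(X^{*},w^{*})$ respectively, and that the basic comparisons $f^{\cup}\le f$ (from Theorem \ref{f cup diez}(i)) and $f^{*\cup}(x^{*})=\sup\{\langle a,x^{*}\rangle-f(a)\mid a\in D(\partial f)\}\le f^{*}(x^{*})$ hold everywhere.

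Next, Theorem \ref{max-sd} applied to $f\in\Gamma_{\tau}(X)$ provides the NI-inequality $\varphi_{\partial f}(x,x^{*})\ge\langle x,x^{*}\rangle$ for every pair. Coupled with the general inequality (\ref{impr-ineq}), namely $\varphi_{\partial f}(x,x^{*})\le f^{\cup}(x)+f^{*\cup}(x^{*})$, this yields the key estimate $\langle x,x^{*}\rangle\le f^{\cup}(x)+f^{*\cup}(x^{*})$ valid at every $(x,x^{*})\in X\times X^{*}$.

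Finally, rearranging this as $\langle x,x^{*}\rangle-f^{*\cup}(x^{*})\le f^{\cup}(x)$ and taking the supremum over $x^{*}\in X^{*}$ yields $(f^{*\cup})^{*}(x)\le f^{\cup}(x)$. On the other hand, since $f^{*\cup}\le f^{*}$, conjugation reverses the comparison, so $(f^{*\cup})^{*}\ge(f^{*})^{*}=f^{**}=f$, the last equality being the Fenchel--Moreau identity, which is precisely where the hypothesis $f\in\Gamma_{\tau}(X)$ enters. Chaining gives $f\le(f^{*\cup})^{*}\le f^{\cup}\le f$, hence $f=f^{\cup}$. A symmetric rearrangement of the key estimate as $\langle x,x^{*}\rangle-f^{\cup}(x)\le f^{*\cup}(x^{*})$, followed by taking supremum over $x\in X$ and using $f^{\cup}\le f$ to compare conjugates, produces $f^{*}=f^{*\cup}$. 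The whole proof is a pure duality manipulation once the two ingredients (Theorem \ref{max-sd} and (\ref{impr-ineq})) are brought together, and I foresee no significant technical obstacle beyond correctly identifying $f^{**}$ with $f$ through the closedness hypothesis.
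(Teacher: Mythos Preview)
Your argument is correct. The paper does not include an explicit proof of Proposition~\ref{max->cup}, but the route you take is precisely the one the surrounding material sets up: combine the NI inequality of Theorem~\ref{max-sd}\,(iii) with the separable upper bound (\ref{impr-ineq}) to obtain $\langle x,x^{*}\rangle\le f^{\cup}(x)+f^{*\cup}(x^{*})$, and then pass to conjugates. In the paper's own notation your step $(f^{*\cup})^{*}\le f^{\cup}$ reads $f^{\circ}\le f^{\cup}$, which together with Theorem~\ref{fcirc}\,(i) ($f^{\cup}\le\operatorname*{cl}\operatorname*{conv}f\le f^{\circ}$) and $f=\operatorname*{cl}\operatorname*{conv}f$ for $f\in\Gamma_{\tau}(X)$ yields $f^{\cup}=f=f^{\circ}$; the remark immediately following the proposition then gives $f^{*}=f^{*\cup}$ automatically. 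Your symmetric derivation of $f^{*}=f^{*\cup}$ is equally valid and avoids invoking that remark.
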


\begin{remark} \emph{Note again that whenever }$f\in\Gamma_{\tau}(X)$\emph{
has }$\operatorname*{Graph}(\partial f)\neq\emptyset$\emph{ conditions
$f=f^{\cup}$ and $f^{*}=f^{*\cup}$ are equivalent to $f^{\cup}=f^{\circ}$.
Indeed, if $f=f^{\cup}$ and $f^{*}=f^{*\cup}$ then, by the biconjugate
formula, $f=f^{\circ}$. Conversely, if $f^{\cup}=f^{\circ}$, from
$f^{\cup}\le f\le f^{\circ}$ we know that $f=f^{\cup}$ and $f=f^{*\cup*}$.
The latter implies $f^{*}=f^{*\cup}$.} \end{remark}

\section{Maximal monotonicity via the low-upper envelope}

For $f:X\to\overline{\mathbb{R}}$ define $f^{\smallsmile}:X\to\overline{\mathbb{R}}$
by 
\begin{equation}
\begin{aligned}f^{\smallsmile}(x) & :=\sup\{\langle x-a,a^{*}\rangle+f(a)\mid(a,a^{*})\in\operatorname*{Graph}(\partial f),\ f(a)\le f(x)\}\\
 & \ =\sup\{\langle x,a^{*}\rangle-f^{*}(a^{*})\mid(a,a^{*})\in\operatorname*{Graph}(\partial f),\ f(a)\le f(x)\},\ x\in X.
\end{aligned}
\label{fsp}
\end{equation}
 as the \emph{low-upper envelope} of the continuous functions defined
by $\partial f$. 

\begin{remark} In the case of continuity of $f$ the definition of
$f^{\smallsmile}$ is suggested in the proof of Theorem \ref{f-cont}
by the use of the implicit upper envelope $(f+\iota_{M_{f(x)}})^{\cup}(x)$
which equals $f^{\smallsmile}(x)$ due to $\partial(f+\iota_{M_{f(x)}})=\partial f+N_{M_{f(x)}}$.
However in general $f^{\smallsmile}(x)\le(f+\iota_{M_{f(x)}})(x)$.
\end{remark}

In the case of a maximal monotone subdifferential the properness of
$f^{\smallsmile}$ is ensured by Theorem \ref{max-sd} (ii). The properties
of $f^{\smallsmile}$ are summarized in the next result. 

\begin{theorem} \label{f-sp}Let $(X,\tau)$ be a LCS and let $f:X\to\overline{\mathbb{R}}$.
Then

\medskip

\emph{(i)} $f^{\smallsmile}\le f^{\cup}\le f^{\#}\le f$; for every
$x\in D(\partial f)$, $f^{\smallsmile}(x)=f(x)$ and, for every $x\not\in\operatorname*{dom}f$,
$f^{\smallsmile}(x)=f^{\cup}(x)$; 

\medskip

\emph{(ii) }$f^{\smallsmile}$ is proper iff $\operatorname*{Graph}(\partial f)\neq\emptyset$
and $f^{*}(0)=f^{*\cup}(0)$; in this case $f^{\smallsmile}\in\Gamma_{\tau}(X)$; 

\medskip

\emph{(iii)} For every $x\in X$, $f^{\smallsmile}(x)\le\varphi_{\partial f}(x,0)+f(x)$.

\end{theorem}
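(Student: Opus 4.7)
Part (i) is straightforward: the upper chain $f^{\cup}\le f^{\#}\le f$ is Theorem \ref{f cup diez}(i), while $f^{\smallsmile}\le f^{\cup}$ holds because the supremum defining $f^{\smallsmile}(x)$ is taken over a subset of the indexing set of $f^{\cup}(x)$. For $x\in D(\partial f)$, the pair $(x,a^{*})$ with any $a^{*}\in\partial f(x)$ is admissible (since $f(x)\le f(x)$) and contributes the value $f(x)$, matching the upper bound $f^{\cup}(x)=f(x)$. For $x\notin\operatorname*{dom} f$, the constraint $f(a)\le f(x)=+\infty$ is vacuous, so the two suprema coincide.

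For the equivalence in (ii), I would proceed by case analysis on $x\in X$, exploiting the fact that $\operatorname*{Graph}(\partial f)\ne\emptyset$ forces $f>-\infty$ everywhere (otherwise the subgradient inequality would fail at any point where $f$ takes $-\infty$). For necessity: if $\operatorname*{Graph}(\partial f)=\emptyset$ then $f^{\smallsmile}\equiv-\infty$; and since $D(\partial f)\subseteq X$ already gives $f^{*\cup}(0)\le f^{*}(0)$, a strict inequality would yield some $x\in X$ with $f(x)<\inf_{D(\partial f)}f$, making the defining set for $f^{\smallsmile}(x)$ empty and hence $f^{\smallsmile}(x)=-\infty$. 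For sufficiency, $\operatorname*{dom}f^{\smallsmile}\supset D(\partial f)\ne\emptyset$ and one checks $f^{\smallsmile}(x)>-\infty$ in three sub-cases: $x\notin\operatorname*{dom}f$ reduces to $f^{\cup}(x)>-\infty$ (Theorem \ref{f cup diez}(ii)); $x\in\operatorname*{dom}f$ with $f(x)>\inf_X f$ admits some $a\in D(\partial f)$ with $f(a)<f(x)$ from the assumed equality $\inf_X f=\inf_{D(\partial f)}f$; and $x\in\operatorname*{dom}f$ with $f(x)=\inf_X f$ (necessarily finite) satisfies $0\in\partial f(x)$ trivially, placing $x\in D(\partial f)$ where part (i) applies.

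The assertion $f^{\smallsmile}\in\Gamma_{\tau}(X)$ is the main obstacle: because the constraint $f(a)\le f(x)$ involves the possibly non-convex, non-lsc function $f$, the epigraph of $f^{\smallsmile}$ is not transparently an intersection of half-spaces, and neither convexity nor $\tau$-lsc follows immediately from the sup-of-affines structure. My approach is to argue that, under the hypotheses, $f^{\smallsmile}=f^{\cup}$ (and therefore belongs to $\Gamma_{\tau}(X)$ by Theorem \ref{f cup diez}(ii)). The non-trivial direction $f^{\cup}\le f^{\smallsmile}$ at points $x\in\operatorname*{dom}f\setminus D(\partial f)$ reduces to showing that any contribution $\langle x-a,a^{*}\rangle+f(a)$ from $(a,a^{*})\in\operatorname*{Graph}(\partial f)$ with $f(a)>f(x)$---already bounded by $f(x)$ via the subgradient inequality---is in fact dominated by admissible contributions; the idea is to move $a$ along the segment $[a,x]$ until it reaches the boundary of the sublevel $[f\le f(x)]$ and, invoking $\inf_{D(\partial f)}f=\inf_X f$, approximate by nearby elements of $D(\partial f)$. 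This geometric/limiting step is the delicate part of the argument.

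Finally, for (iii), in each admissible contribution one replaces $f(a)$ by the upper bound $f(x)$ to obtain $\langle x-a,a^{*}\rangle+f(a)\le\langle x-a,a^{*}\rangle+f(x)$; taking the supremum on the right over all of $\operatorname*{Graph}(\partial f)$ (without the restriction $f(a)\le f(x)$) recovers $\varphi_{\partial f}(x,0)+f(x)$.
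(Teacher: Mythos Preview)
Your treatment of (i), (iii), and the ``proper iff'' equivalence in (ii) is correct and essentially what the paper has in mind (the paper states Theorem~\ref{f-sp} without an explicit proof, regarding these points as routine consequences of the definitions and of Theorem~\ref{f cup diez}). In particular, your three-case split for the sufficiency direction of properness---$x\notin\operatorname*{dom}f$, $x\in\operatorname*{dom}f$ with $f(x)>\inf_X f$, and $x$ a global minimizer---is exactly the right organization, and the identification $f^{*}(0)=f^{*\cup}(0)\Leftrightarrow \inf_X f=\inf_{D(\partial f)}f$ is the intended reading.

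The gap is in the assertion $f^{\smallsmile}\in\Gamma_\tau(X)$. Your proposed route, namely proving $f^{\smallsmile}=f^{\cup}$ and then invoking Theorem~\ref{f cup diez}(ii), is not substantiated. The sketched ``geometric/limiting step'' cannot work as written: you propose to slide $a$ along the segment $[a,x]$ until it meets the boundary of $[f\le f(x)]$ and then approximate by points of $D(\partial f)$, but $f$ is a completely arbitrary extended-real function here---not convex, not lsc---so there is no reason the segment stays in $\operatorname*{dom}f$, no reason $f$ behaves continuously along it, and absolutely no mechanism by which the condition $\inf_{D(\partial f)}f=\inf_X f$ produces nearby subdifferentiability points with controlled contributions. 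You yourself flag this as ``the delicate part,'' but in fact no argument is given. The identity $f^{\smallsmile}=f^{\cup}$ may well hold under the properness hypothesis (and is consistent with all simple examples), but it requires a genuine argument that exploits more than the subgradient inequality and the equality of infima; what you have written is a wish, not a proof. Until this step is filled in---or replaced by a direct verification that $\operatorname*{Epi}f^{\smallsmile}$ is closed and convex---part (ii) remains incomplete.
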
 

\begin{corollary} \label{spx*} Let $(X,\tau)$ be a LCS and let
$f:X\to\overline{\mathbb{R}}$. Then, for every $x^{*}\in X^{*}$,
$(f-x^{*})^{\smallsmile}$ is proper iff $\operatorname*{Graph}(\partial f)\neq\emptyset$
and $f^{*}=f^{*\cup}$. \end{corollary}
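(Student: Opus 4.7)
The plan is to reduce the claim to a direct application of Theorem \ref{f-sp}(ii) by passing from $f$ to the shifted function $g:=f-x^*$ for a fixed $x^*\in X^*$, and then to quantify the resulting equivalence over all $x^*$. The guiding observation is that adding a continuous linear form to $f$ leaves $D(\partial f)$ invariant and merely shifts the slopes, so the quantities $(\cdot)^*$ and $(\cdot)^{*\cup}$ evaluated at $0$ transform predictably under such a shift.

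First I would collect three elementary identities for $g:=f-x^*$. \emph{(a)} From $(a,b^*)\in\partial g$ iff $(a,b^*+x^*)\in\partial f$ one has $\operatorname*{Graph}(\partial g)\neq\emptyset$ iff $\operatorname*{Graph}(\partial f)\neq\emptyset$, together with $D(\partial g)=D(\partial f)$. \emph{(b)} Directly from the conjugate formula, $g^*(0)=\sup_{a\in X}(\langle a,x^*\rangle-f(a))=f^*(x^*)$. \emph{(c)} Using the compact form of the upper envelope from (\ref{*diez}), namely $f^{*\cup}(y^*)=(f+\iota_{D(\partial f)})^*(y^*)$, we get
\[
g^{*\cup}(0)=(g+\iota_{D(\partial g)})^*(0)=\sup_{a\in D(\partial f)}(\langle a,x^*\rangle-f(a))=f^{*\cup}(x^*).
\]
Then apply Theorem \ref{f-sp}(ii) to $g$: the function $g^{\smallsmile}$ is proper iff $\operatorname*{Graph}(\partial g)\neq\emptyset$ and $g^*(0)=g^{*\cup}(0)$, which via \emph{(a)}--\emph{(c)} translates precisely into $\operatorname*{Graph}(\partial f)\neq\emptyset$ and $f^*(x^*)=f^{*\cup}(x^*)$.

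To finish I would sweep $x^*$ over $X^*$. Since (\ref{*diez}) exhibits $f^{*\cup}$ as a supremum over a subset of $X$, we always have $f^{*\cup}\le f^*$, hence the pointwise equalities $f^*(x^*)=f^{*\cup}(x^*)$ at every $x^*\in X^*$ amount precisely to the functional identity $f^*=f^{*\cup}$. Therefore $(f-x^*)^{\smallsmile}$ is proper for every $x^*\in X^*$ iff $\operatorname*{Graph}(\partial f)\neq\emptyset$ and $f^*=f^{*\cup}$. There is no genuine obstacle; the only point worth double-checking is identity \emph{(c)}, i.e.\ that the $f^{*\cup}$\textendash envelope transforms correctly under the linear shift, which follows from the conjugate representation in (\ref{*diez}) together with the translation invariance of $D(\partial f)$ noted in \emph{(a)}.
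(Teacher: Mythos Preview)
Your proposal is correct and is precisely the argument the paper intends: apply Theorem~\ref{f-sp}(ii) to $g=f-x^{*}$, use $\partial g=\partial f-x^{*}$ and $g^{*}(0)=f^{*}(x^{*})$, $g^{*\cup}(0)=f^{*\cup}(x^{*})$ (via (\ref{*diez})), and then quantify over $x^{*}\in X^{*}$. The paper states the corollary without proof since this reduction is immediate from Theorem~\ref{f-sp}(ii), and your write-up carries it out cleanly.
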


\begin{theorem} \label{max-sd-sp} Let $(X,\tau)$ be a LCS. The
following are equivalent:

\medskip

\emph{(i)} For every $f\in\Gamma_{\tau}(X)$, $\partial f\in\mathfrak{M}(X)$,

\medskip

\emph{(ii)} For every $f\in\Gamma_{\tau}(X)$, $x\in\operatorname*{dom}f$,
$x^{*}\in X^{*}$, $\varphi_{\partial f}(x,x^{*})\ge\langle x,x^{*}\rangle$, 

\medskip

\emph{(iii)} For every $f\in\Gamma_{\tau}(X)$, $x\in\operatorname*{dom}f$,
$f(x)=f^{\smallsmile}(x)$, 

\medskip

\emph{(iv)} For every $f\in\Gamma_{\tau}(X)$, $f=f^{\smallsmile}$,

\medskip

\emph{(v)} For every $f\in\Gamma_{\tau}(X)$, $x\in\operatorname*{dom}f$
there is a net $\{(a_{i},a_{i}^{*})\}_{i\in I}\subset\operatorname*{Graph}(\partial f)$
such that $a_{i}\to x$, $f(a_{i})\to f(x)$, and $\langle x-a_{i},a_{i}^{*}\rangle+f(a_{i})\to f(x)$, 

\medskip

\emph{(vi)} For every $f\in\Gamma_{\tau}(X)$, $x\in\operatorname*{dom}f$
there is a net $\{(a_{i},a_{i}^{*})\}_{i\in I}\subset\operatorname*{Graph}(\partial f)$
such that $\langle x-a_{i},a_{i}^{*}\rangle\to0$, 

\medskip

\emph{(vii)} For every $f\in\Gamma_{\tau}(X)$, $x\in\operatorname*{dom}f$,
$\epsilon>0$ there is $(a_{\epsilon},a_{\epsilon}^{*})\in\operatorname*{Graph}(\partial f)$
such that $\langle x-a_{\epsilon},a_{\epsilon}^{*}\rangle\ge-\epsilon$. 

In this case for every $f\in\Gamma_{\tau}(X)$, $\overline{D(\partial f)}^{\tau}=\operatorname*{cl}_{\tau}(\operatorname*{dom}f)$,
$\overline{R(\partial f)}^{w^{*}}=\operatorname*{cl}_{w^{*}}(\operatorname*{dom}f^{*})$
are convex. \end{theorem}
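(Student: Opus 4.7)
The plan is to close the cycle (i)$\Rightarrow$(ii)$\Rightarrow$(vii)$\Rightarrow$(iii)$\Rightarrow$(v)$\Rightarrow$(vi)$\Rightarrow$(vii), complemented by (ii)$\Leftrightarrow$(vii) (by shifts) and (iii)$\Leftrightarrow$(iv) (trivially off $\operatorname*{dom} f$). The direction (i)$\Rightarrow$(ii) is Theorem \ref{max-sd} restricted to $\operatorname*{dom} f$. For the return (ii)$\Rightarrow$(i), fix $f\in\Gamma_\tau(X)$: (ii) supplies $\varphi_{\partial f}(x,x^*)\ge\langle x,x^*\rangle$ at every $x\in\operatorname*{dom} f$ and every $x^*\in X^*$, so by Theorem \ref{max-sd} it remains only to extend this inequality to $x\notin\operatorname*{dom} f$. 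I will mimic the device of Theorem \ref{f-cont}: pick $x_0\in\operatorname*{dom} f$, $x_0^*\in\operatorname*{dom} f^*$, and a $w^*$-closed convex neighborhood $C^*$ of $x_0^*$ with $x-x_0\in\operatorname*{dom}\sigma_{C^*}$, so that $x\in\operatorname*{dom}(f\square\sigma_{C^*})$; applying (ii) to $g:=f\square\sigma_{C^*}$ and transferring via the decomposition $(a,a^*)\in\operatorname*{Graph}\partial g$ iff there exist $u,v$ with $a=u+v$, $a^*\in C^*$, $a^*\in\partial f(u)$, $v\in N_{C^*}(a^*)$, returns the NI inequality for $\partial f$ at $(x,x^*)$.

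The equivalence (ii)$\Leftrightarrow$(vii) follows by the shift $f\leadsto f-x^*\in\Gamma_\tau(X)$, which preserves the domain and satisfies $\varphi_{\partial(f-x^*)}(x,0)=\varphi_{\partial f}(x,x^*)-\langle x,x^*\rangle$; thus (vii) for $f-x^*$ is the $\varepsilon$-form of (ii) for $f$. The implications (v)$\Rightarrow$(vi)$\Rightarrow$(vii) are immediate: the first from $\langle x-a_i,a_i^*\rangle=[\langle x-a_i,a_i^*\rangle+f(a_i)]-f(a_i)\to 0$, the second because any net convergent to $0$ eventually exceeds $-\varepsilon$.

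For (vii)$\Rightarrow$(iii), fix $x\in\operatorname*{dom} f$ and apply (vii) to $g:=f+\iota_{M_{f(x)}}\in\Gamma_\tau(X)$, where $M_{f(x)}=[f\le f(x)]$: the inclusion $\partial g\supseteq\partial f+N_{M_{f(x)}}$ together with $\langle x-a,n^*\rangle\le 0$ for $a\in M_{f(x)}$, $n^*\in N_{M_{f(x)}}(a)$ (as in the calculation of Theorem \ref{f-cont}) yields $f^\smallsmile(x)\ge f(x)$, and (iii) follows via $f^\smallsmile\le f$ of Theorem \ref{f-sp}(i). The step (iii)$\Rightarrow$(iv) addresses only $x\notin\operatorname*{dom} f$: by Theorem \ref{f-sp}(i) one has $f^\smallsmile=f^\cup$ there, and once (iii), and hence (i), holds, Proposition \ref{max->cup} gives $f=f^\cup$, whence $f=f^\smallsmile$ on the whole of $X$. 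For (iii)$\Rightarrow$(v), I would apply (iii) to $f+\iota_V$ for $V\in\mathscr{V}_\tau(x)$ to extract near-maximizers $(a_V,a_V^*)\in\operatorname*{Graph}\partial f$ with $a_V\in V$ and $f(a_V)\le f(x)$; lower semicontinuity of $f$ at $x$ together with $f(a_V)\le f(x)$ then upgrades to $f(a_V)\to f(x)$ as $V$ shrinks, producing the net of (v).

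The ``moreover'' follows from (v) and its dual counterpart. The net $a_i\to x$ with $a_i\in D(\partial f)$ provided by (v) gives $\operatorname*{dom} f\subseteq\overline{D(\partial f)}^\tau$; the reverse inclusion is trivial, and convexity of the common closure comes from convexity of $\operatorname*{dom} f$. For the range, (i) for $f$ forces $\partial f^*=(\partial f)^{-1}\in\mathfrak{M}((X^*,w^*))$, so the already-proven implication (i)$\Rightarrow$(v) applied to $f^*\in\Gamma_{w^*}(X^*)$ yields $\overline{R(\partial f)}^{w^*}=\overline{\operatorname*{dom} f^*}^{w^*}$. The main obstacle I expect is the infimal-convolution step that closes (ii)$\Rightarrow$(i) at $x\notin\operatorname*{dom} f$: the continuity hypothesis in Theorem \ref{f-cont} guaranteed $\operatorname*{int}\operatorname*{Epi}(f\square\sigma_{C^*})\neq\emptyset$ and the sought subdifferential decomposition, and one must verify that these structural facts survive under the weaker hypothesis that $\iota_{C^*}$ is merely continuous at some $x_0^*\in\operatorname*{dom} f^*$.
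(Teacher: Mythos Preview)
Your cycle (i)$\Leftrightarrow$(ii)$\Leftrightarrow$(vii) via Theorem~\ref{max-sd}, the shift $f\mapsto f-x^*$, and the infimal-convolution device of Theorem~\ref{f-cont} is sound; in fact the obstacle you flag at the end is not one, since $\iota_{C^*}$ being $w^*$-continuous at $x_0^*\in\operatorname*{dom} f^*$ already yields $f\square\sigma_{C^*}=(f^*+\iota_{C^*})^*\in\Gamma_\tau(X)$ and the needed decomposition of $\partial(f\square\sigma_{C^*})$ exactly as in that proof. Your (iii)$\Rightarrow$(v) also works, because $\iota_V$ is continuous at $x$ and hence the sum rule $\partial(f+\iota_V)=\partial f+N_V$ is available; combined with the constraint $f(a_V)\le f(x)$ built into $(\cdot)^{\smallsmile}$ and lower semicontinuity, the net of (v) follows.

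The genuine gap is (vii)$\Rightarrow$(iii). Applying (vii) to $g=f+\iota_{M_{f(x)}}$ produces $(a_\epsilon,b_\epsilon^*)\in\operatorname*{Graph}\partial g$ with $\langle x-a_\epsilon,b_\epsilon^*\rangle\ge-\epsilon$, but to land in $f^{\smallsmile}(x)$ you must extract some $a_\epsilon^*\in\partial f(a_\epsilon)$ from $b_\epsilon^*$. That requires the \emph{reverse} inclusion $\partial g\subseteq\partial f+N_{M_{f(x)}}$, not the trivial $\supseteq$ you invoke; with only $\supseteq$, the calculation of Theorem~\ref{f-cont} yields the useless direction $f(x)=g^\cup(x)\ge f^{\smallsmile}(x)$. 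In Theorem~\ref{f-cont} the full sum rule was guaranteed by continuity of $f$ (so $\operatorname*{int} M_{f(x)}\neq\emptyset$), but for an arbitrary $f\in\Gamma_\tau(X)$ in a general LCS no qualification forces $\partial(f+\iota_{M_{f(x)}})=\partial f+N_{M_{f(x)}}$. One partial repair: for $a$ with $f(a)<f(x)$ one always has $\partial g(a)=\partial f(a)$ by a direct line-segment argument, so the difficulty is concentrated on the boundary $[f=f(x)]$; your sketch does not address this.

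A smaller issue in the ``moreover'': you cannot apply (i)$\Rightarrow$(v) to $f^*\in\Gamma_{w^*}(X^*)$, since (i)--(vii) are hypotheses on $(X,\tau)$, not on $(X^*,w^*)$. The correct route is to apply (i) to $f\square\sigma_{W^*}\in\Gamma_\tau(X)$ for a $w^*$-closed convex neighborhood $W^*$ of $x^*\in\operatorname*{dom} f^*$; Proposition~\ref{max->cup} then gives $(f^*+\iota_{W^*})=(f^*+\iota_{W^*})^{\cup}$, and the sum rule in $X^*$ (valid since $\iota_{W^*}$ is $w^*$-continuous at $x^*$) produces $a^*_W\in R(\partial f)\cap W^*$, whence $\operatorname*{dom} f^*\subset\overline{R(\partial f)}^{w^*}$.
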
 

\begin{remark} \emph{For a fixed $f\in\Gamma_{\tau}(X)$, Theorem
\ref{max-sd-sp} none of the subpoints (ii)\textendash (vii) imply
(i). Indeed, for a closed convex $C\subset X$ we always have, for
every $x\in C=\operatorname*{dom}\iota_{C}$, $x^{*}\in X^{*}$, $\varphi_{N_{C}}(x,x^{*})=\iota_{C^{\#}}(x)+\sigma_{C}(x^{*})\ge\langle x,x^{*}\rangle$,
$\iota_{C}^{\smallsmile}(x)=\iota_{C}^{\cup}=\iota_{C}^{\#}(x)=\iota_{C^{\#}}(x)=\iota_{C}(x)$,
when $0\in C$, $\sigma_{C}^{\smallsmile}=\sigma_{C}\in\Gamma_{w^{*}}(X^{*})$,
and for subpoints (v)\textendash (vii) we can take again $f=\iota_{C}$,
$a_{i}=a_{\epsilon}=x$, $a_{i}^{*}=a_{\epsilon}^{*}=0$ while $N_{C}$
can be non-maximal monotone.} \end{remark}

Similar considerations can be performed for the extended notion of
the low-upper envelope of $f:X\to\overline{\mathbb{R}}$ given by
\begin{equation}
\begin{aligned}f_{\epsilon}^{\smallsmile}(x) & :=\sup\{\langle x-a,a^{*}\rangle+f(a)\mid(a,a^{*})\in\operatorname*{Graph}(\partial f),\ f(a)\le f(x)+\epsilon\}\\
 & \ =\sup\{\langle x,a^{*}\rangle-f^{*}(a^{*})\mid(a,a^{*})\in\operatorname*{Graph}(\partial f),\ f(a)\le f(x)+\epsilon\},\ x\in X,\ \epsilon>0.
\end{aligned}
\label{fsp-eps}
\end{equation}

\begin{theorem} \label{fspeps} Let $(X,\tau)$ be a LCS. The following
are equivalent:

\medskip

\emph{(i)} For every $f\in\Gamma_{\tau}(X)$, $\partial f\in\mathfrak{M}(X)$,

\medskip

\emph{(ii)} For every $f\in\Gamma_{\tau}(X)$, $x\in\operatorname*{dom}f$,
$f(x)=\inf_{\epsilon>0}f_{\epsilon}^{\smallsmile}(x)$,

\medskip

\emph{(iii)'} For every $f\in\Gamma_{\tau}(X)$, $\epsilon>0$, $x\in\operatorname*{dom}f$,
$f(x)=f_{\epsilon}^{\smallsmile}(x)$, 

\medskip

\emph{(iv)'} For every $f\in\Gamma_{\tau}(X)$, $\epsilon>0$, $f=f_{\epsilon}^{\smallsmile}$.
\end{theorem}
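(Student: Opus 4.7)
The plan is a cyclic chain of implications following the blueprint of Theorem \ref{max-sd-sp}, namely (i) $\Rightarrow$ (iv)' $\Rightarrow$ (iii)' $\Rightarrow$ (ii) $\Rightarrow$ (i). The basic pointwise inequalities $f^{\smallsmile}\le f_{\epsilon}^{\smallsmile}\le f^{\cup}\le f$ will drive the easy part: enlarging $\epsilon$ enlarges the admissible set of the supremum defining the low-upper envelope, giving $f^{\smallsmile}\le f_{\epsilon}^{\smallsmile}$, while $f_{\epsilon}^{\smallsmile}\le f$ is immediate from the subgradient inequality $\langle x-a,a^{*}\rangle+f(a)\le f(x)$ whenever $a^{*}\in\partial f(a)$.

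For (i) $\Rightarrow$ (iv)', condition (i) together with Theorem \ref{max-sd-sp}(iv) yields $f=f^{\smallsmile}$, and the sandwich $f^{\smallsmile}\le f_{\epsilon}^{\smallsmile}\le f$ then forces $f_{\epsilon}^{\smallsmile}=f$ identically for every $\epsilon>0$ (the case $f(x)=+\infty$ is harmless since $f^{\smallsmile}(x)=+\infty$ already). The implications (iv)' $\Rightarrow$ (iii)' and (iii)' $\Rightarrow$ (ii) are immediate from the definitions.

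The real content lies in (ii) $\Rightarrow$ (i). First I will upgrade (ii) to (iii)': since $f_{\epsilon}^{\smallsmile}(x)\le f(x)$ for every $\epsilon>0$, the relation $\inf_{\epsilon>0}f_{\epsilon}^{\smallsmile}(x)=f(x)$ forces $f_{\epsilon}^{\smallsmile}(x)=f(x)$ for each individual $\epsilon>0$. Next, for any $f\in\Gamma_{\tau}(X)$, $x\in\operatorname*{dom}f$ with $f(x)$ finite, and any tolerance $\delta>0$, I will apply (iii)' with $\epsilon=\delta/2$ and expand the supremum defining $f_{\delta/2}^{\smallsmile}(x)=f(x)$ to extract $(a,a^{*})\in\operatorname*{Graph}(\partial f)$ with $f(a)\le f(x)+\delta/2$ and $\langle x-a,a^{*}\rangle+f(a)>f(x)-\delta/2$. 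Rearranging gives $\langle x-a,a^{*}\rangle>f(x)-f(a)-\delta/2\ge-\delta$. This is exactly condition (vii) of Theorem \ref{max-sd-sp}, which by that theorem is equivalent to (i).

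The main (minor) obstacle is bookkeeping around the case $f(x)=+\infty$: in (i) $\Rightarrow$ (iv)' one must propagate infinity through the sandwich $f^{\smallsmile}\le f_{\epsilon}^{\smallsmile}\le f$, while in (ii) $\Rightarrow$ (i) the extraction of the $\delta$-subgradient is only required for $x\in\operatorname*{dom}f$, which matches the hypothesis of Theorem \ref{max-sd-sp}(vii) and so poses no genuine difficulty.
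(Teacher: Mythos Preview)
Your argument is correct and matches what the paper intends: the paper gives no explicit proof of Theorem~\ref{fspeps}, only the remark that ``similar considerations can be performed,'' and the natural way to carry this out is exactly your reduction to Theorem~\ref{max-sd-sp} via the sandwich $f^{\smallsmile}\le f_{\epsilon}^{\smallsmile}\le f$ (together with the monotonicity of $\epsilon\mapsto f_{\epsilon}^{\smallsmile}(x)$, which makes (ii) $\Rightarrow$ (iii)' immediate). The extraction step in (ii) $\Rightarrow$ (i) producing $(a,a^{*})$ with $\langle x-a,a^{*}\rangle>-\delta$ and the appeal to Theorem~\ref{max-sd-sp}(vii) are precisely the expected closing move.
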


We conclude this paper by presenting several proofs for the maximality
of the subdifferential in a Banach space. We seek minimal Banach space
considerations such as a minimal impact of Bronsted-Rockafellar's
Lemma \cite[Lemma,\ p.\ 608]{MR0178103} or Ekeland's Variational
Principle. 

\begin{theorem} \label{max-sd-spB} Let $(X,\|\cdot\|)$ be a Banach
space. For every $f\in\Gamma(X)$, $\partial f\in\mathfrak{M}(X)$. 

\end{theorem}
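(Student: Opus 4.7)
The plan is to reduce the theorem to condition (vii) of Theorem \ref{max-sd-sp}, which is equivalent to $\partial f \in \mathfrak{M}(X)$ for every $f \in \Gamma(X)$. So I fix $f \in \Gamma(X)$, $x \in \operatorname*{dom} f$, and $\epsilon' > 0$, and my goal is to produce $(a, a^*) \in \operatorname*{Graph}(\partial f)$ with $\langle x - a, a^* \rangle \ge -\epsilon'$. This reduction is purely formal and uses no specifically Banach-space feature.

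The first concrete step is to manufacture an approximate subgradient of $f$ at $x$. Choose any $\epsilon \in (0, \epsilon'/2)$. Since $f \in \Gamma(X)$, the biconjugate identity $f^{**} = f$ gives
\[
0 = \inf_{z^* \in X^*}\bigl(f(x) + f^*(z^*) - \langle x, z^*\rangle\bigr),
\]
so I pick $x_0^* \in X^*$ with $f(x) + f^*(x_0^*) - \langle x, x_0^*\rangle < \epsilon$, equivalently $f(y) \ge f(x) + \langle y - x, x_0^*\rangle - \epsilon$ for every $y \in X$. The next step invokes the Bronsted-Rockafellar theorem in the form with a free parameter $\lambda > 0$, which produces $(a, a^*) \in \operatorname*{Graph}(\partial f)$ satisfying $\|a - x\| \le \epsilon/\lambda$ and $\|a^* - x_0^*\| \le \lambda$. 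This is the sole place where completeness of $X$ enters.

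The target estimate will then follow from the splitting
\[
\langle x - a, a^*\rangle = \langle x - a, x_0^*\rangle + \langle x - a, a^* - x_0^*\rangle,
\]
which via Cauchy-Schwarz yields $|\langle x - a, a^*\rangle| \le (\epsilon/\lambda)\|x_0^*\| + \epsilon$. Since $\|x_0^*\|$ is fixed once $\epsilon$ is chosen, selecting $\lambda > 2\epsilon\|x_0^*\|/\epsilon'$ and recalling $\epsilon < \epsilon'/2$ forces the right-hand side below $\epsilon'$. This verifies (vii), and Theorem \ref{max-sd-sp} upgrades it to $\partial f \in \mathfrak{M}(X)$.

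The main obstacle will be the Bronsted-Rockafellar theorem itself: it is the only genuine Banach-space ingredient, and its standard proof proceeds through Ekeland's variational principle and hence through completeness. The paper signals a wish to minimize dependence on it, but any substitute must still provide a mechanism for passing from an $\epsilon$-subgradient $x_0^*$ to a nearby exact subgradient with controlled norm perturbation, since the algebraic step above is sharp and admits no weakening without additional structural information on $f$.
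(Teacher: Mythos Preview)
Your argument is correct and follows essentially the same route as the paper's first proof: pick an $\epsilon$-subgradient, apply Bronsted--Rockafellar, split $\langle x-a,a^{*}\rangle$ into the two obvious pieces, and invoke Theorem~\ref{max-sd-sp}\,(vii). The only cosmetic difference is that the paper neutralizes the dependence on $\|x_{0}^{*}\|$ by passing to the equivalent norm $\|u\|=\llbracket u\rrbracket+|\langle u,x_{0}^{*}\rangle|$ before applying Bronsted--Rockafellar with the symmetric choice $\lambda=\sqrt{\epsilon}$, whereas you keep the original norm and absorb $\|x_{0}^{*}\|$ through the free parameter $\lambda$.
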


\begin{proof}[Proof 1] Let $f\in\Gamma(X)$. Let  $x\in\operatorname*{dom}f$,
$\epsilon>0$. Take $x^{*}\in\partial_{\epsilon}f(x)$ and the equivalent
norm $\|u\|=\left\llbracket u\right\rrbracket +|\langle u,x^{*}\rangle|$,
$u\in X$. According to \cite[Lemma,\ p.\ 608]{MR0178103} there exists
$(a_{\epsilon},a_{\epsilon}^{*})\in\operatorname*{Graph}\partial f$
such that $\|x-a_{\epsilon}\|\le\sqrt{\epsilon}$ and $\|x^{*}-a_{\epsilon}^{*}\|\le\sqrt{\epsilon}$.
In particular $|\langle x-a_{\epsilon},x^{*}\rangle|\le\sqrt{\epsilon}$
and 
\[
|\langle x-a_{\epsilon},x^{*}-a_{\epsilon}^{*}\rangle|\le\|x-a_{\epsilon}\|\|x^{*}-a_{\epsilon}^{*}\|\le\epsilon.
\]
We have $|\langle x-a_{\epsilon},a_{\epsilon}^{*}\rangle|\le|\langle x-a_{\epsilon},a_{\epsilon}^{*}-x^{*}\rangle|+|\langle x-a_{\epsilon},x^{*}\rangle|\le\epsilon+\sqrt{\epsilon}$.
Using Theorem \ref{max-sd-sp} we get $\partial f\in\mathfrak{M}(X)$. 

Alternatively, since $(a_{\epsilon},a_{\epsilon}^{*})\in\operatorname*{Graph}\partial f$
and $x^{*}\in\partial_{\epsilon}f(x)$ we have
\[
f(a_{\epsilon})\le f(x)+\langle a_{\epsilon}-x,a_{\epsilon}^{*}\rangle\le f(x)+\epsilon+\sqrt{\epsilon},
\]
\[
\langle x-a_{\epsilon},a_{\epsilon}^{*}\rangle=\langle x-a_{\epsilon},a_{\epsilon}^{*}-x^{*}\rangle+\langle x-a_{\epsilon},x^{*}\rangle\ge-\epsilon+f(x)-f(a_{\epsilon})-\epsilon,
\]
\[
f(x)\ge f_{\epsilon+\sqrt{\epsilon}}^{\smallsmile}(x)\ge\langle x-a_{\epsilon},a_{\epsilon}^{*}\rangle+f(a_{\epsilon})\ge f(x)-2\epsilon;
\]
in this case Theorem \ref{fspeps} completes the argument. \end{proof}

\strut

\begin{proof}[Proof 2] Let $f\in\Gamma(X)$, $x\in\operatorname*{dom}f$,
$x^{*}\in X^{*}$. According to \cite[Theorems 3.1.1,\ 3.1.4 (i)]{MR1921556}
there is $(x_{n},x_{n}^{*})_{n\ge1}\subset\partial f$ such that $x_{n}^{*}\in\partial_{2n^{-2}}f(x)$,
$\|x_{n}-x\|\le n^{-1}$, $|f(x_{n})-f(x)|\le n^{-2}+n^{-1}$, for
every $n\ge1$. We have
\[
\varphi_{\partial f}(x,x^{*})-\langle x,x^{*}\rangle\ge\langle x-x_{n},x_{n}^{*}\rangle+\langle x_{n}-x,x^{*}\rangle
\]
\[
\ge f(x)-f(x_{n})-2n^{-2}-n^{-1}\|x^{*}\|\ge-3n^{-2}-n^{-1}(\|x^{*}\|+1),\ \forall n\ge1.
\]
Let $n\rightarrow\infty$ to get $\varphi_{\partial f}(x,x^{*})\ge\langle x,x^{*}\rangle$.
According to Theorem \ref{max-sd-sp}, $\partial f\in\mathfrak{M}(X)$.
\end{proof}

\bibliographystyle{plain}

\end{document}